\numberwithin{equation}{section}
\newtheorem{theorem}{Theorem}[section]
\newtheorem{lemma}[theorem]{Lemma}
\newtheorem{proposition}[theorem]{Proposition}
\newtheorem{corollary}[theorem]{Corollary}
\theoremstyle{definition}
\newtheorem{definition}[theorem]{Definition}
\newtheorem{remark}[theorem]{Remark}
\newcommand{\R}{\mathbb R}
\newcommand{\C}{{\rm cap}_\alpha}
\def\C{\mathbb{C}}
\def\H{\mathcal H}
\def\eps{\varepsilon}
\def\lt{\left}
\def\rt{\right}
\def\S{\mathbb{S}}
\def\les{\lesssim}
\def\ges{\gtrsim}
\def\f{\bold{f}}
\def\Exc{\mathrm{Exc}}
\def\dist{\mathrm{dist}}
\author{Micha\"el Goldman}
\address{LJLL, Universit\'e Paris Diderot, CNRS, UMR 7598, Paris, France}
\email{goldman@math.univ-paris-diderot.fr}
\author{Matteo Novaga}
\address{Dipartimento di Matematica, Universit\`a di Pisa, 56127 Pisa, Italy} 
\email{matteo.novaga@unipi.it} 
\author{Berardo Ruffini}
\address{Dipartimento di Matematica, Universit\`a degli Studi di Bologna, 40126 Bologna, Italy}
\email{berardo.ruffini@unibo.it}
\title{Reifenberg flatness for almost minimizers of the perimeter under minimal assumptions}
\keywords{Measure theoretical perimeter, almost minimizers, Reifenberg flatness}
\subjclass[2010]{49Q05; 49Q15; 49Q20}
\begin{document}
\maketitle

\begin{abstract}
The aim of this note is to prove that almost minimizers of the perimeter are Reifenberg flat, for a very weak notion of  minimality. The main observation is that smallness of the excess at some scale implies smallness of the excess at all smaller scales.
\end{abstract}

\section{Introduction}
Motivated by applications to a variational model for charged liquid drops (see \cite{MurNovRuf,MurNovRuf2,GolNovRufprep}), the aim of this note is to study the  regularity  of almost minimizers of the (anisotropic) perimeter under minimal assumptions. For a uniformly elliptic and regular anisotropy $\Phi$ (see Definition \ref{def:anis}) and a set of finite perimeter $E$, we define $P_\Phi(E)$ as 
\[
 P_\Phi(E)=\int_{\partial E} \Phi(x,\nu_E) d\H^{n-1}.
\]
Here $\nu_E$ denotes the (measure-theoretic) outward normal to $E$. For  $\Lambda,r_0>0$  we say that $E$ is a $(\Lambda,r_0)-$minimizer of the perimeter $P_\Phi$ if for every $x\in \R^n$ and $r\le r_0$,
\begin{equation}\label{def:Qmin}
 P_\Phi(E)\le P_\Phi(F)+ \Lambda r^{n-1} \qquad \text{ if }\quad E\Delta F\subset B_r(x).
\end{equation}
Our main result is the following (all constants depend implicitly on the dimension and on the given anisotropy $\Phi$)
\begin{theorem}\label{theo:main}
 For every $\delta>0$, there exists $\eps(\delta)>0$ such that if $\Lambda\le \eps$  then every $(\Lambda,r_0)-$minimizer is $\delta-$Reifenberg flat (see Definition \ref{Reifenberg flatness}) outside a singular set of vanishing $\H^{n-3}$ measure.
\end{theorem}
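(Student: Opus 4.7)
The plan is to combine an excess improvement step proved by contradiction--compactness with a dyadic iteration, and then to appeal to the standard equivalence between uniform smallness of the spherical excess at all scales and $\delta$-Reifenberg flatness. For $x\in\partial E$ and $r\in (0,r_0]$, I will use the spherical flatness excess
\[
 \Exc(E,x,r) := \inf_{\nu\in\S^{n-1}} \frac{1}{r^{n-1}} \int_{\partial^* E\cap B_r(x)} \frac{|\nu_E(y)-\nu|^2}{2}\, d\H^{n-1}(y),
\]
(or its anisotropic variant defined through $\Phi$). Under the rescaling $E_{x,r}:=(E-x)/r$, a $(\Lambda,r_0)$-minimizer of $P_\Phi$ becomes a $(\Lambda,r_0/r)$-minimizer for the anisotropy $\Phi(x+r\,\cdot\,,\cdot)$, so that the deviation stays of order $\Lambda$ at \emph{every} scale. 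In contrast with the classical regime where the deviation is $\Lambda r^{n-1+\gamma}$, blowing up does not by itself improve minimality, and the required smallness must come directly from the hypothesis $\Lambda\le\eps$ itself.

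The heart of the proof is an \emph{excess improvement lemma}: there exists $\theta\in(0,1/2]$ such that, for every $\eta>0$, one can find $\eps_0=\eps_0(\eta)>0$ so that whenever $E$ is a $(\Lambda,r_0)$-minimizer with $\Lambda\le\eps_0$, $x\in\partial E$ and $\Exc(E,x,r)\le\eps_0$ for some $r\le r_0$, then $\Exc(E,x,\theta r)\le\eta$. I would prove this by the standard contradiction--compactness scheme: if the conclusion failed for some $\eta$, one could extract $(\Lambda_k,r_k^0)$-minimizers $E_k$ and radii $r_k\le r_k^0$ with $\Lambda_k\to 0$, $\Exc(E_k,x_k,r_k)\to 0$, yet $\Exc(E_k,x_k,\theta r_k)>\eta$. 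After translating to $x_k=0$ and rescaling to $r_k=1$, BV-compactness and lower semicontinuity of $P_\Phi$ yield a subsequential $L^1_{\text{loc}}$-limit $F$ which, because $\Lambda_k\to 0$, is a \emph{local} minimizer of the perimeter with anisotropy frozen at the limit point. The vanishing of $\Exc(E_k,0,1)$ forces $F$ to be a halfspace, so $\Exc(F,0,\theta)=0$; this contradicts $\Exc(E_k,0,\theta)>\eta$ via the upper semicontinuity of the excess along such convergences, which itself rests on the uniform density estimates $\H^{n-1}(\partial E\cap B_r(x))\simeq r^{n-1}$ available for $(\Lambda,r_0)$-minimizers with $\Lambda$ small. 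The main obstacle lies in controlling the anisotropy contamination in the blow-up, which I would handle through uniform continuity of $\Phi$ in $x$ and routine comparison arguments.

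Dyadic iteration of the improvement lemma yields $\Exc(E,x,\theta^k r)\le\eta$ for every $k$ at any point $x\in\partial E$ where the excess is at most $\eps_0$ at \emph{some} scale $r\le r_0$; interpolating between consecutive dyadic radii gives uniform smallness of the excess on every scale below $r$, which is well known to imply that $\partial E$ is $\delta$-Reifenberg flat near $x$ as soon as $\eta\le\eta(\delta)$. It remains to estimate the exceptional set
\[
 \Sigma \;:=\; \{\, x\in\partial E : \Exc(E,x,r)>\eps_0 \text{ for every } r\le r_0\,\}.
\]
I would show $\H^{n-3}(\Sigma)=0$ by combining the uniform two-sided density estimates enjoyed by $(\Lambda,r_0)$-minimizers with a Besicovitch-type covering argument in the spirit of the Ambrosio--Fusco--Pallara treatment of singular points of sets of finite perimeter. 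This last step is technical but essentially follows a standard template; the genuinely delicate ingredient is the improvement lemma, where one must carefully quantify how close an almost minimizer of a varying anisotropy is to a true local minimizer of the frozen one.
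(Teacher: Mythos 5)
Your overall strategy mirrors the paper's: propagate smallness of the spherical excess from one scale to all smaller scales (which is exactly the paper's Proposition~4.1, the excess decay), deduce Reifenberg flatness from the height bound, and show the set of "bad" points has vanishing $\H^{n-3}$-measure. However, there are two genuine gaps in your argument as written.

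First, your excess improvement lemma, as extracted from a soft contradiction--compactness argument, does not self-iterate. You prove: for every $\eta>0$ there is $\eps_0(\eta)$ so that $\Lambda,\Exc(E,x,r)\le\eps_0$ forces $\Exc(E,x,\theta r)\le\eta$. To pass from $\theta r$ to $\theta^2 r$ you would need $\Exc(E,x,\theta r)\le\eps_0(\eta)$, but you only know $\Exc(E,x,\theta r)\le\eta$, and the compactness argument gives no control on the relative sizes of $\eta$ and $\eps_0(\eta)$ (indeed one expects $\eps_0(\eta)<\eta$). So the "dyadic iteration" step is not justified. The paper avoids this by proving a \emph{quantitative} tilt lemma (Lemma~4.6), $\Exc_\nu(E,\theta r)\lesssim\theta^2\Exc_n(E,r)+C_\theta\Lambda+\ell\theta r$, obtained via the Lipschitz/harmonic approximation and Caccioppoli machinery; the factor $\theta^2<1$ makes the iteration close, and a single choice of $\theta$ with $C\theta^2\le 1/2$ and $\Lambda$ small gives $\Exc(E,\theta^k r)\lesssim\eps$ uniformly in $k$. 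A softer compactness proof can be salvaged, but only if one proves a genuine \emph{decay}---e.g., by keeping $\Exc(E_k,0,1)\le\eta$ bounded away from zero, letting $\Lambda_k\to 0$, and invoking the interior $C^{1,\alpha}$ regularity of the minimizing limit $F$ (not merely that $F$ is a halfspace) to conclude $\Exc(F,0,\theta)\le\eta/2$; with only a halfspace limit you get no comparison between $\eps_0(\eta)$ and $\eta$.

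Second, the estimate $\H^{n-3}(\Sigma)=0$ cannot be obtained from density estimates and a Besicovitch covering. Density bounds (or an Ambrosio--Fusco--Pallara--type argument) can only show the singular set is $\H^{n-1}$-null; to reach codimension $\ge 3$ (resp. $\ge 8$ in the isotropic case) one needs Federer's dimension reduction, which rests on monotonicity and conical blow-ups and does not apply directly to $(\Lambda,r_0)$-minimizers in the sense \eqref{def:Qmin}, since $\Lambda$ does not decay under rescaling. The paper circumvents this with a stability/compactness argument: suppose $\H^{n-3}(\Sigma_\eps(E_k))>0$ for a sequence $\Lambda_k\to 0$, use the covering lemma \cite[Lemma~28.14]{Maggi} to localize, rescale so that $F_k$ converge (with perimeter convergence) to a \emph{true} minimizer $F$ of the frozen anisotropy, and then use $\H^{n-3}(\Sigma(F))=0$ together with continuity of the excess to trap $\Sigma_\eps(F_k)$ in an open set of small $\H^{n-3}_\infty$-pre-measure, contradicting the choice of covering. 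This transfer from minimizers to almost minimizers is essential and is missing from your outline.
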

\begin{remark}
 In the isotropic case, the Hausdorff dimension of the singular set is smaller than $8$. 
\end{remark}

In \cite{GoMaTa} it is shown that in fact we cannot expect much more regularity under the weak minimality condition \eqref{def:Qmin}.
The classical theory for almost minimizers of the perimeter asserts that, under the stronger condition
\begin{equation}\label{eq:Qminvstrong}
 P_\Phi(E)\le P_\Phi(F)+ \Lambda r^{n-1+\alpha} \qquad \forall E\Delta F\subset B_r(x),
\end{equation}
with $\alpha\in (0,1)$, almost minimizers are $C^{1,\frac{\alpha}{2}}$ outside a small singular set (see \cite{SSA,Bombieri,Tamanini,Maggi, DPM}). The closest result to ours is the one of Ambrosio and Paolini \cite{AmPa} where they prove that  for the isotropic perimeter $P$, if $\omega(r)$ is a rate function such that $\omega(r)\to0$ as $r\to 0$ and if $E$ satisfies
\begin{equation}\label{eq:QminAmPa}
 P(E,B_r(x))\le (1+ \omega(r))P(F,B_r(x)) \qquad \forall E\Delta F\subset B_r(x),
\end{equation}
then outside a small singular set, $\partial E$ is vanishing Reifenberg flat. Our result is thus an improvement over \cite{AmPa} with a rate function which is small but not infinitesimal. Indeed, thanks to the density estimates (see Proposition \ref{density}), condition \eqref{def:Qmin} is equivalent (up to changing the value of $\Lambda$) to 
\begin{equation}\label{eq:Qministrong}
 P(E,B_r(x))\le (1+ \Lambda)P(F,B_r(x)) \qquad \forall E\Delta F\subset B_r(x).
\end{equation}

Our proof of Theorem \ref{theo:main} follows the classical route of regularity theory for minimizers of the perimeter as pioneered by De Giorgi. The main ingredient is an $\eps-$regularity result. To state it we introduce the spherical excess:
\begin{equation}\label{def:excess}
 \Exc(E,x,r)=\min_{\nu\in \S^{n-1}} \frac{1}{r^{n-1}}\int_{\partial E\cap B_r(x)}\frac{|\nu-\nu_E|^2}{2} d\H^{n-1}.
\end{equation}
When $x=0$ we simply write $\Exc(E,r)=\Exc(E,0,r)$.
\begin{theorem}\label{theo:eps}
 For every $\delta>0$ there exists $\eps(\delta)>0$ such that if $E$ is a $(\Lambda,r_0)-$minimizer with   $0\in \partial E$ and (see Definition \ref{def:anis} for the definition of $\ell$)
 \[
  \Exc(E,r)+ \Lambda + \ell r\le \eps 
 \]
 then $E$ is $(\delta, \frac{r}{2})-$Reifenberg flat in $B_{\frac{r}{2}}$.
\end{theorem}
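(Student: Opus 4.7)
My plan is to follow the classical De Giorgi route, adapted to the weak almost-minimality condition (\ref{def:Qmin}). The central observation, already highlighted in the abstract, is that under (\ref{def:Qmin}) the excess cannot be expected to decay quantitatively (as under the stronger condition (\ref{eq:Qminvstrong})), but its smallness can be propagated across all scales. Once this is established, combined with the density estimates of Proposition \ref{density}, Reifenberg flatness follows from the standard height/Hausdorff approximation arguments of geometric measure theory.

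The core of the argument is an \emph{excess improvement} lemma: there exist $\theta \in (0, 1/2)$, $\eta \in (0, 1)$, $C > 0$ and $\eps_1 > 0$ such that any $(\Lambda, r_0)$-minimizer $E$ with $y \in \partial E$, $r \le r_0$, and $\Exc(E, y, r) + \Lambda + \ell r \le \eps_1$ satisfies
\[
\Exc(E, y, \theta r) \;\le\; \eta\,\Exc(E, y, r) + C(\Lambda + \ell r).
\]
This is proved by the usual compactness/contradiction argument: rescaling a hypothetical counterexample sequence to unit scale, the almost-minimality constants $\Lambda_k r_k$ and $\ell_k r_k$ tend to zero and the limit $E_\infty$ is a local perimeter minimizer of the frozen anisotropy $\Phi(y_0, \cdot)$ with zero excess at scale one; a Lipschitz/harmonic approximation of $\partial E_k$ as graphs converges to a harmonic function, whose Campanato-type decay estimates give the desired decay of excess at scale $\theta$, contradicting the choice of the sequence. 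The additive $C(\Lambda + \ell r)$ term handles the regime where $\Lambda + \ell r$ dominates $\Exc$, in which the inequality follows from the trivial bound $\Exc(E, y, \theta r) \le \theta^{-(n-1)}\Exc(E, y, r)$.

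Iterating this improvement at a fixed $y$ yields, provided $\eps$ is taken small enough that the hypothesis $\Exc(E, y, \theta^k r) + \Lambda + \ell\theta^k r \le \eps_1$ is preserved through the iteration, the uniform bound
\[
\Exc(E, y, \theta^k r) \;\le\; \eta^k\,\Exc(E, y, r) + \frac{C}{1-\eta}(\Lambda + \ell r) \qquad \forall k\ge 0.
\]
Hence smallness is preserved at all dyadic scales. Interpolation via the trivial monotonicity $\Exc(E, y, s) \le (s'/s)^{n-1}\Exc(E, y, s')$ extends this to all $s \le r/2$. Running the iteration at each $y \in \partial E \cap B_{r/2}$, with initial datum controlled by $\Exc(E, y, r/2) \le 2^{n-1}\Exc(E, 0, r)$, we obtain a uniform bound $\Exc(E, y, s) \le \omega(\eps)$ with $\omega(\eps) \to 0$ as $\eps \to 0$, valid for every $y \in \partial E \cap B_{r/2}$ and every $s \le r/2$.

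Finally, from the uniform smallness of the excess at all scales, combined with the lower density bounds of Proposition \ref{density}, the standard Hausdorff approximation lemma forces $\partial E \cap B_s(y)$ to lie within Hausdorff distance $\delta s$ of a hyperplane $H_{y,s}$, which is exactly $(\delta, r/2)$-Reifenberg flatness in $B_{r/2}$. The main obstacle is the excess improvement lemma: because the additive $\Lambda$ error does not decay with the scale, the iteration produces only boundedness (not summable decay) of the excess, which is precisely why one obtains Reifenberg flatness rather than the $C^{1,\alpha}$ regularity available under (\ref{eq:Qminvstrong})---a limitation shown to be sharp by the counterexample of \cite{GoMaTa}.
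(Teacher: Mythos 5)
Your proposal follows essentially the same route as the paper: a one-step excess improvement estimate of the form $\Exc(E,y,\theta r)\le \eta\,\Exc(E,y,r)+C_\theta(\Lambda+\ell r)$, an iteration that propagates smallness of the excess to all scales (accepting that the additive $\Lambda$ error prevents actual decay, which is the paper's central observation), and finally the height bound together with the density estimates to convert uniform smallness of the excess into $(\delta,r/2)$-Reifenberg flatness. The only substantive deviation is that you propose to prove the excess improvement by a compactness/contradiction blow-up argument, whereas the paper proves the corresponding Tilt Lemma (Lemma \ref{tilt}) directly and quantitatively through the Lipschitz approximation (Proposition \ref{lipapprox}), the De Giorgi harmonic approximation, and the Caccioppoli inequality (Proposition \ref{caccioppoli}); the two methods are interchangeable here, the direct route simply making the dependence of the constant $C_\theta$ on $\theta$ explicit. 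Two small remarks on your sketch: the constant $\Lambda$ is scale-invariant, so under the blow-up $F_k=r_k^{-1}(E_k-y_k)$ it is $\Lambda_k$ (not $\Lambda_k r_k$) that must tend to $0$, which in your contradiction sequence it does; and the step from the harmonic limit back to excess decay requires the Caccioppoli inequality $\Exc_\nu\lesssim \f_{2,\nu}+\Lambda+\ell r$, which is compressed into your phrase ``Campanato-type decay estimates give the desired decay of excess'' and deserves to be named, as it is precisely where the almost-minimality enters a second time.
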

\subsubsection*{Comments on the proof}
The proof follows the standard Campanato iteration scheme with at its heart the excess improvement by tilting lemma (see Lemma \ref{tilt} below). 
In particular, we largely stick to the arguments in \cite{Maggi} (and \cite{DPM} for the treatment of the anisotropic case) where the proof is written  in the language of sets of finite perimeters.

\begin{remark}
A technical difference between our definition of almost minimality and those in \cite{Maggi,DPM} is that our error term depends on the diameter of the perturbation, instead of the measure of the symmetric difference between the two competing sets. This induces some differences in the proof of   the density estimates in Proposition \ref{density} as well as in the outcome of the  almost harmonicity property \eqref{lip3}. 
\end{remark}

As also nicely explained in \cite[Section 7]{DHV}, the result is derived by the following main steps. The starting point is to obtain density upper and lower bounds
(see Proposition \ref{density}). The ingredients for the tilting lemma are then the following. In the small excess regime, 
we show that locally $\partial E$ can be mostly covered by the graph of a Lipschitz function $u$ with  Dirichlet energy bounded by the excess (this is done combining the height bound,  Proposition \ref{hb} with  a Lipschitz approximation argument, Proposition \ref{lipapprox}).  
Moreover the approximating function $u$ is almost harmonic (see \eqref{lip3}). Using this information and the regularity theory for harmonic functions we obtain smallness of the flatness $\f_{2,\nu}$ (see  \eqref{def:flat} for its definition) 
at a smaller scale.  The tilt lemma is then concluded by the Cacciopoli inequality, Proposition \ref{caccioppoli}, which bounds the excess by the flatness.\\
Our main observation is that while in the classical theory the tilt lemma leads to excess decay, in the sense that under condition \eqref{eq:Qminvstrong} or even \eqref{eq:QminAmPa}, the excess converges to $0$ as $r\to 0$, in our setting the excess is only going to remain small (essentially bounded by $\Lambda$). For a similar application of this idea in the context of optimal transport see \cite{goldman}.\\
Once we know that the excess and thus also the flatness remain small at all scales, the Reifenberg regularity follows from the height bound, Proposition \ref{hb} (see also \cite{AmPa}).\\

In order to conclude the proof of Theorem \ref{theo:main}, we also need to estimate the size of the singular set. This is obtained by a classical stability argument, 
essentially the continuity of the excess, which allows to transfer the known results for minimizers of the perimeter
(i.e. the case $\Lambda=0$) to the case $\Lambda>0$ small.

We close this introduction with another  simple but useful consequence of this stability argument:

\begin{corollary}\label{cor:reg}
For every $\delta>0$ there exists $\bar \Lambda=\bar \Lambda(\delta)$ such that  for every $C^1-$regular,  compact set $E$, if  $E_k$ are $(\Lambda_k,r_0)-$minimizers with $\limsup_{k\to \infty}\Lambda_k\le \bar \Lambda$ and $E_k\to E$ in $L^1$, then there exists $r>0$ such that if $k$ is large enough, $\partial E_k$ are uniformly $(\delta,r)-$Reifenberg flat.
\end{corollary}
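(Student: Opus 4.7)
The plan is to apply the $\eps$-regularity statement Theorem \ref{theo:eps} uniformly at every boundary point of $E_k$ for $k$ large. Given $\delta>0$, let $\eps=\eps(\delta)$ be the threshold provided by Theorem \ref{theo:eps} and set $\bar\Lambda:=\eps/3$. Since $E$ is $C^1$-regular and compact, its outward normal $\nu_E$ is continuous on $\partial E$, hence $\Exc(E,x,r)\to 0$ as $r\to 0$ uniformly in $x\in\partial E$. I thus fix $r_1>0$ small enough so that
\[
\sup_{x\in\partial E}\Exc(E,x,r_1)\le \eps/6\qquad\text{and}\qquad \ell r_1\le \eps/6,
\]
and pick $k_0$ large enough that $\Lambda_k\le \eps/2$ for $k\ge k_0$.

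Next I would upgrade the $L^1$ convergence $E_k\to E$ to Hausdorff convergence of the boundaries. This is standard for almost minimizers: the uniform density estimates of Proposition \ref{density} depend only on the ambient data and on an upper bound for $\Lambda_k$, and they rule out both sudden vanishing of boundary points and the appearance of isolated boundary components away from $\partial E$. Consequently, for $k$ large every $x_k\in\partial E_k$ satisfies $\dist(x_k,\partial E)\to 0$, and along a subsequence $x_k\to y\in\partial E$.

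The main obstacle is the stability (upper semicontinuity) of the excess: if $x_k\to y\in\partial E$ I need
\[
\limsup_{k\to\infty}\Exc(E_k,x_k,r_1)\le \Exc(E,y,r_1).
\]
The expected proof is the classical continuity-of-perimeter argument for almost minimizers. Testing $E_k$ against the competitor $(E_k\setminus B_{r_1}(x_k))\cup(E\cap B_{r_1}(x_k))$ in \eqref{def:Qmin} yields, for a.e. $r_1$, the upper bound $\limsup_k P_\Phi(E_k,B_{r_1}(x_k))\le P_\Phi(E,B_{r_1}(y))+C\Lambda_k r_1^{n-1}$; combined with the lower semicontinuity of $P_\Phi$ under $L^1$ convergence, this forces convergence of the vector-valued perimeter measures on $B_{r_1}(y)$. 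Plugging the $\Exc$-minimizing direction for $E$ at $y$ as a (suboptimal) test direction for $E_k$ then passes \eqref{def:excess} to the limit.

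Putting the pieces together, for $k$ large and every $x_k\in\partial E_k$,
\[
\Exc(E_k,x_k,r_1)+\Lambda_k+\ell r_1\le \eps/3+\eps/2+\eps/6\le \eps,
\]
so Theorem \ref{theo:eps} yields that $E_k$ is $(\delta,r_1/2)$-Reifenberg flat in $B_{r_1/2}(x_k)$. As this holds uniformly in $x_k\in\partial E_k$, $\partial E_k$ is uniformly $(\delta,r_1/2)$-Reifenberg flat, proving the corollary with $r:=r_1/2$.
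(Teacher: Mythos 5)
Your proposal follows essentially the same route as the paper: fix $\eps=\eps(\delta)$ from Theorem~\ref{theo:eps}, use the density estimates to upgrade $L^1$-convergence to Hausdorff convergence of the boundaries, establish stability of the excess along the converging sequence, and then apply Theorem~\ref{theo:eps} uniformly on $\partial E_k$ for $k$ large. The paper phrases the compactness step via a finite cover of $\partial E$ by balls $B_{r/4}(x_i)$, transferring the small excess of $E$ at $x_i$ to nearby points $x_{i,k}\in\partial E_k$; you instead invoke uniform smallness of $\Exc(E,\cdot,r_1)$ directly from the $C^1$-regularity and compactness of $\partial E$. These are cosmetically different packagings of the same idea.

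The one place where your argument is imprecise is the claim that the competitor test plus lower semicontinuity ``forces convergence of the vector-valued perimeter measures on $B_{r_1}(y)$.'' That conclusion would require $\limsup_k P_\Phi(E_k,B_{r_1}(x_k))\le P_\Phi(E,B_{r_1}(y))$, but here $\Lambda_k$ does \emph{not} tend to zero (unlike in the proof of Theorem~\ref{theo:main}), so the competitor only gives
\[
\limsup_{k\to\infty} P_\Phi(E_k,B_{r_1}(x_k))\le P_\Phi(E,B_{r_1}(y))+C\bar\Lambda\,r_1^{n-1}
\]
for a.e.\ $r_1$. Strict convergence of perimeters therefore fails in general, and correspondingly the excess stability you obtain is only
\[
\limsup_{k\to\infty}\Exc(E_k,x_k,r_1)\le \Exc(E,y,r_1)+C\bar\Lambda
\]
with a universal constant $C$ (here one uses convergence of the Gauss--Green measures, which does hold from $L^1$-convergence alone, plus the one-sided perimeter bound). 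This extra $C\bar\Lambda$ term is harmless provided you choose $\bar\Lambda=c\,\eps$ with $c$ small enough to absorb it, but your specific choice $\bar\Lambda=\eps/3$ need not work if $C>1/2$. So: not a fundamental gap, but the ``convergence of perimeter measures'' statement should be weakened to the one-sided bound above and the constant $\bar\Lambda$ tuned accordingly.
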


\section{Definitions and notation}
We refer to \cite{AFP,Maggi} for the definitions and main properties of sets of finite perimeter. In particular we denote by $\partial E$ the measure-theoretic boundary and $\partial^* E$ the reduced boundary. We denote by $\nu_E$ the outward normal to $E$ and by $P(E)$ its (measure theoretical) perimeter. 
% We introduce some notations and definitions before stating the results of the paper. General references for such notations are . With $E$ we denote a measurable set in $R^n$, $n\ge2$. We denote by $|E|$ its Lebesgue measure and with $P(E)$ its theoretical perimeter 
% \[
% P(E)=\inf\left\{ \int_E {\rm div}\varphi(x)\, dx\, : \,  \varphi\in C^1(\R^n,B(0,1)) \right\}.
% \] 
% A set is of finite perimeter if $P(E)<+\infty$. For any set of finite perimeter we define its reduced boundary as
% \[
% \partial^*E=\left\{ x\in\partial E\,:\, \limsup_{r\to0}\frac{D\chi_{E}(B_r(x))}{|D\chi_{E}(B_r(x))|}<+\infty \right\},
% \]
% where $D\chi_{E}$ is the distributional derivative of the characteristic function of $E$. For any point $x\in\partial^*E$ we set 
% \[
% \nu_{E}(x):=\limsup_{r\to0}\frac{D\chi_{E}(B_r(x))}{|D\chi_{E}(B_r(x))|},
% \]
% the theoretical measure outer normal of $\partial E$ in $x$, which exists for $\mathcal H^{n-1}-$a.e. point of $\partial E$ [Mag]. 

\begin{definition}\label{def:anis} We say that  $\Phi:\R^n\times\R^n\to\R^+$  is an elliptic integrand if it is   lower-semicontinous and if  $\Phi(x,\cdot)$ is a positive 1-homogeneous and convex function, that is $\Phi(x,t \nu)=t\Phi(x,\nu)$ for $t>0$. 
For  a set of finite perimeter $E$ and  an open set $U$ we define
\[
P_{\Phi}(E,U)=\int_{\partial E\cap U}\Phi(x,\nu_{E}(x))\,d\mathcal H^{{n-1}}(x).
\]
If $U=\R^n$ we simply write $P_\Phi(E)=P_\Phi(E,\R^n)$. An elliptic integrand is called a uniformly elliptic and regular anisotropy if for every $x\in \R^n$, $\Phi(x,\cdot)\in C^{2,1}(\S^{n-1})$ and there exists $\lambda\ge 1$ and $\ell\ge 0$ such that for any $x,y\in\R^n$, and $\nu,\nu'\in\mathbb S^{n-1}$ and $e\in \R^n$,
\begin{equation}\label{ellipticity}
\begin{aligned}
&\frac1\lambda\le\Phi\le\lambda\\
&|\Phi(x,\nu)-\Phi(y,\nu)|+|\nabla\Phi(x,\nu)-\nabla \Phi(y,\nu)|\le \ell |x-y| \\
& |\nabla \Phi(x,\nu)|+|\nabla^2\Phi(x,\nu)|+\frac{|\nabla^2\Phi(x,\nu)-\nabla^2\Phi(x,\nu')|}{|\nu-\nu'|}\le \lambda \\
& \nabla^2\Phi(x,\nu)e\cdot e\ge\frac{|e-(e\cdot\nu)\nu|^2}{\lambda},
\end{aligned}
\end{equation}
where $\nabla \Phi$ and $\nabla^2\Phi$ denote the gradient and the Hessian with respect to the $\nu$ variable.
\end{definition}
We will use the notation $A\les B$ to indicate that there exists a constant $C$ depending on the dimension $n$ and on the fixed anisotropy $\Phi$ through $\lambda$ such that $A\le C B$. For scaling purposes we keep the dependence in $\ell$ explicit. In some statements we will also use the notation $A\ll B$ to indicate that there exists a (typically small) universal constant $\eps>0$ such that if $A\le \eps B$ then the conclusion of the statement holds.
\begin{definition}\label{Reifenberg flatness}
 Let $\delta,r>0$ and $x\in \R^n$. We say that $E$ is $(\delta,r)-$Reifenberg flat in $B_r(x)$ if for every $B_{r'}(y)\subset B_r(x)$, there exists an hyperplane $H_{y,r'}$ containing $y$ and such that 
 \begin{itemize}
  \item we have 
   \[
  \frac{1}{r'} \dist(\partial E\cap B_{r'}(y), H_{y,r'}\cap B_{r'}(y)) \le \delta,
 \]
 where $\dist$ denotes the Hausdorff distance;
 \item one of the connected components of 
 \[
 \{ \dist(\cdot, H_{y,r'})\ge  2 \delta r'\}\cap B_{r'}(y)
 \]
is included in $E$ and the other in $E^c$.
 \end{itemize}
We say that $E$ is uniformly $(\delta,r)-$Reifenberg flat if the above conditions hold for every $x\in \partial E$.

\end{definition}
We now introduce several notions of excess and flatness that we will use. To do that we first need to fix some geometric notation. For a given point $x$, radius $r$ and direction $\nu\in \S^{n-1}$, we define the cylinder 
\[
 \C_\nu(x,r)=x+ \{y\in\R^n \ : \ |y\cdot \nu|<r, \, |y-(y\cdot \nu)\nu|<r\}.
\]
In the normalized situation where $x=0$ and $\nu=e_n$ we simply set $\C(r)=\C_{e_n}(0,r)$ and $\C=\C(1)$. We often write a point $x\in \R^n$  as $x=(x',x_n)\in\R^{n-1}\times\R$ and define $\nabla'$ the gradient with respect to the first $(n-1)-$variables. We denote by $B'_r$ the disk of radius $r$ in the plane $x_n=0$ so that $\C=B'_1\times(-1,1)$. Given $x\in \R^n$, $r>0$ and $\nu\in \S^{n-1}$, we define the cylindrical excess in the direction $\nu$ as 
\begin{equation}\label{def:cylexcess}
 \Exc_\nu(E,x,r)= \frac{1}{r^{n-1}}\int_{\partial E\cap \C_\nu(x,r)}\frac{|\nu-\nu_E|^2}{2} d\H^{n-1}.
\end{equation}
As above, if $x=0$ and $\nu=e_n$ we simply write $\Exc_n(E,r)=\Exc_{e_n}(E,0,r)$. Notice that $\Exc(E,x,r)\le \Exc_\nu(E,x,r)$. We define the cylindrical ($L^2$) flatness as 
\begin{equation}\label{def:flat}
 \f_{2,\nu}(E,x,r)=\frac{1}{r^{n-1}}\min_{h\in \R}\int_{\partial E\cap \C_\nu(x,r)}\frac{|(y-x)\cdot\nu -h|^2}{r^2}\,d\mathcal H^{n-1}(y).
\end{equation}

\section{Density  estimates}
In this section we show  density estimates for $(\Lambda,r_0)-$minimizers. Let us first point out that under the minimality condition \eqref{eq:Qministrong} these have already been established, see \cite{Maggi}. It is well-known that minimality conditions such as \eqref{eq:Qminvstrong} need to be treated with extra-care since the error term does not go to zero for fixed $r$ as $|E\Delta F|\to 0$. 
The standard proof of density estimates for almost minimizers in the sense \eqref{eq:Qminvstrong} goes through a monotonicity formula (see \cite[Lemma 2]{Tamanini}). Under the weaker condition \eqref{def:Qmin}, this monotonicity formula would lead to a logarithmically failing estimate and we need to argue differently.

\begin{proposition}\label{density}
There exists a universal $\eps>0$ such that if $\Lambda<\eps$ and  $E$ is a $(\Lambda,r_0)-$minimizer  then  for every $x\in \partial E$ and every $0<r\le r_0$
\begin{equation}\label{dens1}
\min(|E\cap B_r(x)|,|B_r(x)\backslash E|)\ges r^n 
\end{equation}
and 
\begin{equation}\label{dens2}
r^{n-1}\les P(E,B_r)\les r^{n-1}.
\end{equation}
\end{proposition}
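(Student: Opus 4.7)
The plan is to reduce the proposition to a differential inequality for $m(r):=|E\cap B_r(x)|$ and then extract the lower density bound from it. As a first step, I would use the competitor $F=E\setminus B_r(x)$ in \eqref{def:Qmin}: since $E\Delta F = E\cap B_r(x)\subset B_r(x)$, and the boundary of $F$ inside $\bar B_r(x)$ lies on $\partial B_r(x)$ where $\Phi\le\lambda$, the inequality \eqref{def:Qmin} combined with $\Phi\ge 1/\lambda$ yields
\[
P(E,B_r(x)) \lesssim \mathcal{H}^{n-1}(E\cap\partial B_r(x)) + \Lambda r^{n-1} = m'(r) + \Lambda r^{n-1}
\]
for a.e.\ $r\le r_0$. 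Plugging this into the isoperimetric inequality for $E\cap B_r(x)$ (noting $P(E\cap B_r(x))\le P(E,B_r(x))+m'(r)$) gives the key ODI
\[
m(r)^{(n-1)/n} \le C_1\bigl(m'(r) + \Lambda r^{n-1}\bigr),\qquad\text{a.e. } r\in(0,r_0].
\]

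The second step, which is the core of the argument, is to derive the lower density bound $m(r)\gtrsim r^n$ from this differential inequality with a constant independent of $\Lambda$. Rewriting the ODI as
\[
(m^{1/n})'(r) \ge \frac{1}{nC_1} - \frac{\Lambda r^{n-1}}{n\, m(r)^{(n-1)/n}},
\]
I would set $\alpha := 1/(2nC_1)$ and argue by a barrier/comparison method that $m(r)^{1/n}\ge \alpha r$ for all $r\le r_0$. The point is that on the set $\{m^{1/n}\ge \alpha r\}$ the error term is bounded by $\Lambda/(n\alpha^{n-1})\le \alpha$ provided $\Lambda$ is smaller than a universal constant depending only on $n$ and $\lambda$, so that $(m^{1/n})'\ge \alpha$ there. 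Hence the function $r\mapsto m(r)^{1/n}-\alpha r$ cannot drop to zero at any positive radius: if $\tau>0$ were the first such point, then at $\tau$ one would simultaneously have derivative $\le \alpha$ (since the function leaves the positive region) and derivative $\ge \alpha$ (from the ODI applied at the threshold), a contradiction.

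The remaining possibility is that $m(r)^{1/n}<\alpha r$ holds throughout $(0,r_0]$. This is the delicate case and the main obstacle: the ODI a priori also admits slowly-growing solutions of type $m^{1/n}\sim \beta r$ with $\beta\sim \Lambda^{1/n}$, so one cannot rule this out from the ODI alone. I would handle it by using $0\in\partial E$ together with the monotonicity of $m$ to show that such a regime forces the non-autonomous error term $\Lambda r^{n-1}/m^{(n-1)/n}$ to be asymptotically dominated by $1/(nC_1)$, reducing to the previous case after a soft continuity argument at $r\to 0^+$. (Equivalently, one can run the barrier argument directly on $f(r):=m(r)+\Lambda r^n/n$, which satisfies $(f^{1/n})'\gtrsim 1$ on $\{m\ge \Lambda r^n/n\}$ and $f(0)=0$, and then absorb the correction $\Lambda r^n/n$ into the conclusion when $\Lambda$ is small enough.) The symmetric argument with competitor $F=E\cup B_r(x)$ produces $|B_r(x)\setminus E|\gtrsim r^n$, completing \eqref{dens1}.

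Finally, \eqref{dens2} follows quickly from what was just established. The relative isoperimetric inequality inside $B_r(x)$ combined with \eqref{dens1} gives $r^{n-1}\lesssim P(E,B_r(x))$. For the upper bound, the comparison from the first step yields $P(E,B_{r'}(x))\lesssim m'(r')+\Lambda (r')^{n-1}$ for a.e.\ $r'\le r_0$; since on any interval $(r,2r)$ one can pick $r'$ with $m'(r')\lesssim r^{n-1}$ (the average of $m'$ being bounded by $n\omega_n r^{n-1}$), the monotonicity $P(E,B_r)\le P(E,B_{r'})$ for $r<r'$ delivers $P(E,B_r(x))\lesssim r^{n-1}$ for every $r\le r_0$.
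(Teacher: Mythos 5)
Your route differs genuinely from the paper's: you attack \eqref{dens1} first via a differential inequality for $m(r)=|E\cap B_r(x)|$ and a continuous barrier argument, and then derive \eqref{dens2}; the paper goes the other way, proving \eqref{dens2} first by a \emph{discrete} iteration on the normalized perimeter $r^{1-n}P(E,B_r)$ over radii $\theta^k r$ (deriving the improvement \eqref{toproveiterdens} from a good slice of $\partial B_t$ plus the relative isoperimetric inequality), and only afterwards obtaining \eqref{dens1} as a corollary. Your first step, the ODI $m^{(n-1)/n}\le C_1(m'+\Lambda r^{n-1})$, is correct, and you rightly flag why the usual Gronwall-type integration breaks down here: because the error term is $\Lambda r^{n-1}$ rather than $\Lambda m$, the ODI admits ``slow'' solutions $m\sim\beta r^n$ with $\beta\sim\Lambda^{n/(n-1)}$.

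However, the resolution you offer for this delicate case is a genuine gap. Both of your suggested fixes lack the decisive input. The ``soft continuity argument at $r\to0^+$'' is not specified: merely knowing $0\in\partial E$ gives $m(r)>0$ for all $r>0$ but no quantitative lower bound, and the ODI is perfectly compatible with $m\sim\Lambda^{n/(n-1)}r^n$. Likewise, the parenthetical variant with $f=m+\Lambda r^n/n$ does not launch: the inequality $(f^{1/n})'\gtrsim 1$ is only available on $\{m\ge\Lambda r^n/n\}$, and if $m\lesssim\Lambda^{n/(n-1)}r^n$ on a neighborhood of $0$ (which, since $\Lambda$ is small, means $m\ll\Lambda r^n/n$) then you never enter the good region and $f(0)=0$ gives no traction. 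What is actually needed, and what the paper uses explicitly (``by translation and density we may assume $x=0$ and $0\in\partial^*E$'' followed by the contradiction with $\lim_{r\to0}r^{1-n}P(E,B_r)=\omega_{n-1}$), is De Giorgi's blow-up at a reduced boundary point: for $x\in\partial^*E$ one has $\lim_{r\to0}m(r)/r^n=\omega_n/2$, which is a \emph{universal} positive constant and therefore seeds your barrier at small scales with $\alpha<(\omega_n/2)^{1/n}$; afterwards one extends to all of $\partial E$ by density of $\partial^*E$ (since $\partial E\subset\overline{\partial^*E}$ for sets of finite perimeter). With this initial condition stated, your barrier argument closes and gives a clean alternative to the paper's iteration; without it, the ``delicate case'' you identify is not actually ruled out.

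A minor remark: your upper bound in \eqref{dens2} is more convoluted than needed. Since $m'(r)=\mathcal H^{n-1}(E^{(1)}\cap\partial B_r)\le n\omega_n r^{n-1}$ for a.e.\ $r$, the comparison $P(E,B_r)\lesssim m'(r)+\Lambda r^{n-1}$ already gives $P(E,B_r)\lesssim r^{n-1}$ directly, without averaging over $(r,2r)$ (which in any case would need $r\le r_0/2$).
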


\begin{proof}
We  argue as in the proof of Proposition 6.6 in \cite{DHV}.
By translation and density we may assume without loss of generality that $x=0$ and $0\in \partial^*E$. We first prove \eqref{dens2}. The upper bound is easy. Using the almost minimality property \eqref{def:Qmin} with $E\backslash B_r$ we obtain 
\[
 P(E,B_r)\les P_\Phi(E,B_r)\les \H^{n-1}(\partial B_r\cap E) +\Lambda r^{n-1}\les r^{n-1}.
\]
 We thus turn our attention to the lower bound. Let $\theta\in(0,1/2)$ be fixed  and then $\eta=\eta(\theta) $ to be chosen below. We claim that if 
\begin{equation}\label{hyp:contrad}
 \frac{1}{r^{n-1}}P(E,B_r)\le \eta
\end{equation}
then there exists $C>0$ such that 
\begin{equation}\label{toproveiterdens}
 \frac{1}{(\theta r)^{n-1}}P(E,B_{\theta r})\le \theta \frac{1}{r^{n-1}}P(E, B_r) + C\Lambda.
\end{equation}
Indeed, if \eqref{hyp:contrad} holds then by the relative isoperimetric inequality,
\[
 \min\lt( \frac{|E\cap B_r|}{r^n}, \frac{| B_r\backslash E |}{r^n}\rt)\les \lt(\frac{1}{r^{n-1}}P(E,B_r)\rt)^{\frac{n}{n-1}}\les \eta^{\frac{1}{n-1}} \frac{1}{r^{n-1}}P(E,B_r).
\]
Assume first that $|E\cap B_r|\le |B_r\backslash E|$. Choose then $t\in (\theta r, 2\theta r)$ such that 
\begin{align*}
 \H^{n-1}(E\cap \partial B_t)&\le \frac{1}{\theta r}\int_{\theta r}^{2\theta r} \H^{n-1}(E\cap \partial B_s) ds \\
 & \les \frac{|E\cap B_r|}{\theta r} \les \theta^{-1} \eta^{\frac{1}{n-1}}  P(E,B_r).
\end{align*}
Testing the almost minimality property \eqref{def:Qmin} with $F= E\backslash B_t$ we then get 
\[
 P(E,B_t)\les P_\Phi(E,B_t)\les \H^{n-1}(E\cap \partial B_t)+ \Lambda t^{n-1}\les \theta^{-1} \eta^{\frac{1}{n-1}}  P(E,B_r) + \Lambda (\theta r)^{n-1}.
\]
Since $P(E,B_{\theta r})\le P(E,B_t)$, we get
\begin{equation}\label{intermediate}
 \frac{1}{(\theta r)^{n-1}}P(E,B_{\theta r})\le  C\lt(\theta^{-n}\eta^{\frac{1}{n-1}} \frac{1}{r^{n-1}} P(E,B_r) + \Lambda\rt).
\end{equation}
If instead $|E\cap B_r|\ge |B_r\backslash E|$, we argue exactly in the same way using $E^c$ instead of $E$ (and noting that if $E$ satisfies \eqref{def:Qmin} then also $E^c$ satisfies it). Choosing now 
\[\eta=\min\lt(\frac{\omega_{n-1}}{2}, \lt(\frac{1}{C} \theta^{n+1}\rt)^{n-1}\rt),\]
so that in particular  $C\theta^{-n}\eta^{\frac{1}{n-1}}\le\theta$, we conclude the proof of \eqref{toproveiterdens}.
Assume now that 
\[
 \Lambda \le \frac{\eta(1-\theta)}{C}
\]
where $C$ is the constant appearing in \eqref{toproveiterdens}. Then if \eqref{hyp:contrad} holds, by \eqref{toproveiterdens}
\[
 \frac{1}{(\theta r)^{n-1}}P(E,B_{\theta r})\le \theta \eta + C\Lambda\le \eta,
\]
and we can iterate to obtain 
\[
 \limsup_{k\to \infty} \frac{1}{(\theta^k r)^{n-1}}P(E,B_{\theta^k r})\le\eta.
\]
Since $\eta< \omega_{n-1}$,  this contradicts $\lim_{r\to 0} \frac{1}{r^{n-1}}P(E,B_r)=\omega_{n-1}$ (which follows from the hypothesis $0\in \partial^* E$)  so that the lower bound in \eqref{dens2} is proven.\\

We now prove \eqref{dens1}. For this we will show that there is $\eta'$ such that if 
\begin{equation}\label{hypdens}
 \min(|E \cap B_r|, |B_r\backslash E|)\le \eta' r^n
\end{equation}
then we reach a contradiction with \eqref{dens2}. Indeed, assume that \eqref{hypdens} holds. Arguing as above we find $t\in (\theta r, 2\theta r)$ such that 
\[
 P(E,B_t)\les \frac{\eta'}{\theta} r^{n-1}\le C_\theta \eta' t^{n-1}.
\]
Taking $\eta'$ small enough we reach the desired contradiction.

\end{proof}

\section{Excess decay}

The aim of this section is to prove the following result:
\begin{proposition}\label{excessdecay}
There exists $\eps_{\rm dec}>0$ such that if $\eps\in(0,\eps_{\rm dec})$ and  $E$ is a $(\Lambda,r_0)-$minimizer with $0\in \partial E$ and 
\[
 \Exc(E,r)+ \ell r+ \Lambda\le \eps
\]
then for $r'\in(0,r)$,
\begin{equation}\label{conclusiondecay}
 \Exc(E,r')\les \eps.
\end{equation}

\end{proposition}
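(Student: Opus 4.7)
The plan is to iterate a one-step excess improvement estimate, namely the tilt lemma announced in the introduction. Concretely, I expect the tilt lemma to provide universal constants $\tau\in(0,1/2)$, $C_0>0$ and $\eps_1>0$ such that, whenever $0\in\partial E$ and $\Exc(E,r)+\Lambda+\ell r\le \eps_1$, one has
\[
\Exc(E,\tau r)\;\le\;\tfrac{1}{2}\,\Exc(E,r)+C_0(\Lambda+\ell r).
\]
The key structural feature, and the point at which the present setting departs from the classical theory of \eqref{eq:Qminvstrong}, is that the additive error $\Lambda$ does not decay with $r$: iteration of this improvement cannot produce genuine excess decay, only a uniform-in-scale bound.

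Setting $r_k:=\tau^k r$, I would establish by induction on $k\ge 0$ that
\[
\Exc(E,r_k)\;\le\;2^{-k}\,\Exc(E,r)+2C_0(\Lambda+\ell r).
\]
The case $k=0$ is immediate. For the inductive step, one first checks the smallness hypothesis $\Exc(E,r_k)+\Lambda+\ell r_k\le \eps_1$; by the inductive bound together with $\Lambda+\ell r\le\eps$ and $\ell r_k\le \ell r$, the left-hand side is at most $(3+2C_0)\eps$, so choosing $\eps_{\rm dec}\le \eps_1/(3+2C_0)$ guarantees that the tilt lemma applies at scale $r_k$. It then yields
\[
\Exc(E,r_{k+1})\;\le\;\tfrac{1}{2}\bigl[2^{-k}\Exc(E,r)+2C_0(\Lambda+\ell r)\bigr]+C_0(\Lambda+\ell r_k)\;\le\;2^{-(k+1)}\Exc(E,r)+2C_0(\Lambda+\ell r),
\]
closing the induction. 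In particular $\Exc(E,r_k)\les\eps$ uniformly in $k$.

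It remains to pass from the geometric sequence $\{r_k\}$ to an arbitrary radius $r'\in(0,r)$. Pick $k\ge 0$ with $r_{k+1}\le r'<r_k$, and let $\nu_k\in\S^{n-1}$ achieve the minimum in $\Exc(E,r_k)$. Since $B_{r'}\subset B_{r_k}$,
\[
\Exc(E,r')\;\le\;\frac{1}{(r')^{n-1}}\int_{\partial E\cap B_{r_k}}\frac{|\nu_k-\nu_E|^2}{2}\,d\H^{n-1}\;\le\;\tau^{-(n-1)}\,\Exc(E,r_k)\;\les\;\eps,
\]
which is the desired conclusion \eqref{conclusiondecay}. The main technical content lives in the tilt lemma itself; the iteration is standard Campanato bookkeeping, but its novelty here is that one must track the additive contribution of $\Lambda$ carefully, using the contraction factor $1/2$ to prevent it from accumulating and exceeding the smallness threshold $\eps_1$ required to re-apply the tilt lemma at each step.
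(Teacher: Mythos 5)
Your proposal is correct and follows essentially the same approach as the paper: iterate a one-step spherical-excess improvement derived from the tilt lemma, use the contraction factor to keep the accumulating $\Lambda$-term under control so the smallness hypothesis re-applies at every scale, and interpolate from the geometric sequence $\theta^k r$ to arbitrary $r'$ via the elementary monotonicity estimate costing a factor $\theta^{-(n-1)}$. The only cosmetic difference is that the paper phrases the induction as a uniform bound $\Exc(E,\theta^k r)\le \bar C\eps$ rather than tracking the decaying $2^{-k}\Exc(E,r)$ term explicitly, and it spells out the (straightforward) translation from the cylindrical excess of Lemma \ref{tilt} to the spherical excess.
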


The   strategy follows the steps described in the introduction. Since most of the proof is identical to the existing literature (see in particular \cite{Maggi}) we only point out the main differences.

\subsection{Height bound}
We start with the height bound. Without loss of generality, we may assume that $x=0$ and $\nu=e_n$.

\begin{proposition}\label{hb}[Height Bound]
For every  $\delta\in(0,1/4)$ there exists $\eps_{\rm hb}=\eps_{\rm hb}(\delta)>0$ such that if  $E$ is a $(\Lambda,r_0)-$minimizer and 
\[
 \Exc_n(E,2r)+ \ell r<\eps_{\rm hb},
\]
 then 
\[
\sup \left\{ |x_n|\,:\, x\in \C(r)\cap \partial E \right\}\le\delta r,
\]
and
\[
\left| \left\{x\in \C(r)\cap  E\,:\, x_n>\delta r  \right\} \right|+\left| \left\{x\in \C(r)\cap  E^c\,:\, x_n<-\delta r  \right\} \right|=0.
\]
\end{proposition}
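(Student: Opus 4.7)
My plan is to follow the classical proof of the height bound for almost minimizers (\cite[Theorem 22.8]{Maggi}, and \cite[Section 3]{DPM} in the anisotropic setting), with the minor bookkeeping changes needed because in \eqref{def:Qmin} the error term is $\Lambda r^{n-1}$ rather than $\Lambda |E \Delta F|$. I would prove the pointwise statement by contradiction. Suppose there exists $x^* \in \C(r) \cap \partial E$ with $x^*_n > \delta r$ (the case $x^*_n < -\delta r$ is symmetric). The perimeter lower bound from Proposition \ref{density}, applied at $x^*$ at scale $\rho = \delta r /8$, yields $P(E, B_\rho(x^*)) \gtrsim (\delta r)^{n-1}$, with $B_\rho(x^*) \subset \C(2r) \cap \{x_n > \delta r / 2\}$. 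The goal is to contradict this via an upper bound on $P_\Phi(E, \C(r) \cap \{x_n > \delta r/2\})$ coming from the smallness of the excess and the almost minimality of $E$.

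For this I would use the slicing identity obtained by the divergence theorem applied to $\chi_E e_n$ on $\C(2r) \cap \{x_n < t\}$ (whose lateral part of the boundary has outer normal orthogonal to $e_n$): for a.e.\ $t \in (-2r, 2r)$,
\[
\H^{n-1}(E \cap B'_{2r} \times \{t\}) = \H^{n-1}(E \cap B'_{2r} \times \{-2r\}) - \H^{n-1}(\partial^* E \cap \C(2r) \cap \{x_n < t\}) + O\bigl(r^{n-1}\Exc_n(E,2r)\bigr),
\]
where I used $\nu_E \cdot e_n = 1 - |\nu_E - e_n|^2/2$. The excess smallness and the density estimates force both $\H^{n-1}(E \cap B'_{2r} \times \{-2r\})$ and $\H^{n-1}(\partial^*E \cap \C(2r) \cap \{x_n < 0\})$ to be close to $|B'_{2r}|$ (an orientation step), while the coarea bound $\int_{-2r}^{2r} \H^{n-2}(\partial^* E \cap \{x_n = t\} \cap \C(2r)) \, dt \lesssim r^{n-1} \Exc_n(E,2r)^{1/2}$ combined with Chebyshev on $(\delta r/4, \delta r/2)$ selects a good level $t^* \in (\delta r/4, \delta r/2)$ at which $\H^{n-1}(E \cap B'_r \times \{t^*\}) \lesssim r^{n-1} \eps_{\rm hb}^{1/2}/\delta$. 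I then test \eqref{def:Qmin} against the competitor $F = (E \cap \{x_n < t^*\}) \cup (E \setminus \C(r))$, for which $E \Delta F \subset \C(r) \cap \{x_n \ge t^*\} \subset B_{cr}(0)$. After cancelling the common perimeter contributions this gives
\[
P_\Phi(E, \C(r) \cap \{x_n > t^*\}) \le \lambda \, \H^{n-1}(E \cap B'_r \times \{t^*\}) + C \Lambda r^{n-1},
\]
which, against the lower bound $P(E, B_\rho(x^*)) \gtrsim (\delta r)^{n-1}$, yields the contradiction once $\eps_{\rm hb}(\delta)$ is chosen small enough, absorbing the $\Lambda$ term via the implicit universal smallness $\Lambda \lesssim \eps_{\rm hb}$ already built into Proposition \ref{density}.

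Once the pointwise height bound holds, the slab $\C(r) \cap \{|x_n| > \delta r\}$ contains no boundary point of $E$, so $E$ is a.e.\ constant on each of its two components. The orientation (empty on the upper slab, full on the lower one) is then fixed by the smallness of $\Exc_n(E, 2r)$, which forces $\nu_E$ to be close to $e_n$ on most of $\partial^* E \cap \C(2r)$ and therefore pins $E$ to the half-space $\{x_n < 0\}$ side. The main obstacle is the good-level selection: making $\H^{n-1}(E \cap B'_r \times \{t^*\})$ genuinely small (of order $\eps_{\rm hb}^{1/2} r^{n-1}$, and not merely bounded by the trivial $|B'_r| \sim r^{n-1}$) is what lets the excess enter the final comparison estimate in a usable quantitative way. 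This delicate step relies precisely on the slicing identity above together with the density-based information that $E$ essentially fills the bottom slice $B'_{2r} \times \{-2r\}$.
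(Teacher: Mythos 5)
The paper itself does not re-prove this statement: it simply cites \cite[Lemma 4.1]{DPM}, noting that the only modification needed is the compactness property of almost minimizers, which is supplied by \cite[Proposition 1.4]{AmPa}, and also points to \cite[Lemma 7.2]{DHV} where the height bound is deduced directly from the density estimates. Your proposal instead pursues a direct, quantitative slicing argument in the spirit of \cite[Theorem 22.8]{Maggi}, thereby bypassing the compactness step altogether. That is a legitimate and genuinely different route, and its ingredients (divergence-theorem slicing identity, coarea bound controlled by $\Exc_n^{1/2}$, density estimates to pass from a measure bound to a pointwise one) are the right ones.

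However, as written there is a real gap in the comparison step. Your competitor is $F=(E\cap\{x_n<t^*\})\cup(E\setminus\C(r))=E\setminus(\C(r)\cap\{x_n\ge t^*\})$, and $\partial^*F$ picks up not only the ``floor'' $E^{(1)}\cap(B'_r\times\{t^*\})$ but also the ``lateral wall'' $E^{(1)}\cap\partial B'_r\times(t^*,r)$, whose $\H^{n-1}$ measure is only a priori bounded by $\H^{n-2}(\partial B'_r)(r-t^*)\sim r^{n-1}$ and is therefore not negligible. The claimed inequality $P_\Phi(E,\C(r)\cap\{x_n>t^*\})\le\lambda\,\H^{n-1}(E\cap B'_r\times\{t^*\})+C\Lambda r^{n-1}$ omits this term. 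To control the wall one would first need a weak volume bound $|E\cap\C(r)\cap\{x_n>\delta r/4\}|\ll(\delta r)^n$, and then select a good lateral radius by coarea; but once the weak volume bound is in hand, the pointwise statement already follows from the volume density lower bound \eqref{dens1} of Proposition \ref{density} applied at $x^*$ (comparing $|E\cap B_\rho(x^*)|\gtrsim\rho^n$ against the small slab volume), and the competitor argument becomes superfluous. In short: replace the perimeter comparison by the volume comparison and the gap disappears. A secondary issue is your orientation claim that $\H^{n-1}(\partial^*E\cap\C(2r)\cap\{x_n<0\})$ is close to $|B'_{2r}|$: what the divergence theorem and small excess actually give is that $\H^{n-1}(\partial^*E\cap\C(2r))$ is close to $\H^{n-1}(E^{(1)}\cap B'_{2r}\times\{-2r\})-\H^{n-1}(E^{(1)}\cap B'_{2r}\times\{2r\})$, and turning this into the localized-below-zero statement needs the additional step (using $0\in\partial E$ together with the density estimates, or the almost-full/almost-empty dichotomy from the relative isoperimetric inequality on slices) that you have not spelled out. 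With these two points repaired, your direct approach is a sound alternative to the paper's citation-based proof and has the advantage of never invoking a compactness theorem for $(\Lambda,r_0)$-minimizers.
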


\begin{proof}
The proof is exactly as in \cite[Lemma 4.1]{DPM}. The only difference is for the   compactness   properties of almost minimizers. In our case this may be obtained as in \cite[Proposition 1.4]{AmPa}. See also \cite[Lemma 7.2]{DHV}, where it is observed that the height bound is a direct consequence of the density estimates.
\end{proof}

\begin{remark}\label{rem:Reifenberg}
 We stress that Proposition \ref{hb} in particular shows that if the excess is small at every scale around a given point, then $E$ is Reifenberg flat at that point.
\end{remark}

\subsection{Lipschitz approximation}
The second ingredient  is the fact that as long as the excess is small  inside a cylinder, the boundary of $E$ can be approximated by a Lipschitz function.
\begin{proposition}\label{lipapprox}[Lipschitz approximation]
For every  $\delta\in(0,1/4)$ there exists $\eps_{\rm lip}=\eps_{\rm lip}(\delta)>0$ and  $\sigma>0$ such that if $E$ is a $(\Lambda,r_0)-$minimizer with 
\[
 \Exc_n(E,2r)+ \ell r\le \eps_{\rm lip},
\]
then there exists $u:\R^{n-1}\to\R$ such that, if 
\[
M=\C(r)\cap \partial E,
\]
\[
M_0=\left\{y\in M\,:\, \sup_{s\in(0,r)}\Exc_n(E,y,s)\le\sigma \right\}
\]
and $\Gamma$ is the graph of $u$ in $B'_r$,  then 

\begin{equation}\label{lip1}
  \sup_{B'_r}|u|\le \delta r,\qquad Lip(u)\le1,\qquad M_0\subset M\cap \Gamma, \qquad  \frac{\mathcal H^{n-1}(M\Delta\Gamma)}{r^{n-1}}\les \Exc_n(E,2r),
\end{equation}
and
\begin{equation}\label{lip2}
\frac{1}{r^{n-1}} \int_{B'_r}|\nabla' u|^2\les \Exc_n(E,2r).
\end{equation}
Moreover, for all $\varphi\in C^1_c(B'_r)$

\begin{equation}\label{lip3}
\frac{1}{r^{n-1}} \int_{B'_r} \nabla^2\Phi(0,e_n)(\nabla' u,0)\cdot(\nabla' \varphi,0)\les \|\nabla'\varphi \|_\infty (\Exc_n(E,2r)+{\sqrt\Lambda }+ \ell r).
\end{equation}

\end{proposition}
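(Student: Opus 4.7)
The plan is to follow the Lipschitz approximation scheme of \cite[Theorem 23.7]{Maggi} and \cite[Theorem 5.1]{DPM}, deviating only in the first-variation computation needed for \eqref{lip3}, where the error term $\Lambda r^{n-1}$ of \eqref{def:Qmin} behaves differently from the $\Lambda|E\Delta F|$-type error used there. After scaling I may normalize $r=1$.

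First I would construct the graph. The height bound, Proposition \ref{hb}, at scale $2r$ squeezes $M$ inside $\{|x_n|\le \delta r\}$. For any $y\in M_0$, iterating Proposition \ref{hb} at all dyadic scales $s\in(0,r)$ (permissible because $\Exc_n(E,y,s)\le \sigma$ and $\ell s\le \eps_{\mathrm{lip}}$) shows that locally near $y$ the set $\partial E$ is a graph of arbitrarily small slope; in particular the projection $y\mapsto y'$ is injective on $M_0$ with $\frac12$-Lipschitz inverse, and a Kirszbraun extension produces $u\colon\R^{n-1}\to \R$ with $\|u\|_\infty\le \delta r$, $\mathrm{Lip}(u)\le 1$, and $\Gamma\supset M_0$. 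For the bad set, if $y\in M\setminus M_0$ pick $s(y)\in(0,r)$ with $\Exc_n(E,y,s(y))>\sigma$ and apply a Besicovitch covering to $\{\C_{e_n}(y,s(y))\}$; summing excess contributions and using the perimeter upper bound from Proposition \ref{density} yields $\H^{n-1}(M\setminus M_0)\les \sigma^{-1}\Exc_n(E,2r)\,r^{n-1}$, which gives the last estimate of \eqref{lip1}. The Dirichlet bound \eqref{lip2} then comes from the pointwise identity $|\nabla' u|^2\simeq |\nu_E-e_n|^2$ on $M_0$ plus the trivial estimate $|\nabla' u|\le 1$ off $\pi(M_0)$, together with the area control just obtained.

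The novel ingredient is the almost-harmonicity estimate \eqref{lip3}. For $\varphi\in C^1_c(B'_r)$ and $s\in \R$ with $|s|\,\|\nabla'\varphi\|_\infty\ll 1$, let $\phi_s(x)=x+s\,\varphi(x')e_n$; since $\phi_s(E)\Delta E\subset \C(r)$, condition \eqref{def:Qmin} applied with both signs of $s$ yields
\[
 |P_\Phi(\phi_s(E))-P_\Phi(E)|\le \Lambda r^{n-1}.
\]
Expanding $P_\Phi(\phi_s(E))$ to second order in $s$ on the graph portion of $\partial E$, controlling the contribution of $M\setminus \Gamma$ through \eqref{lip1}, and freezing $\Phi$ at $0$ at cost $\ell r$, one arrives at
\[
 \Bigl|s\int_{B'_r}\nabla^2\Phi(0,e_n)(\nabla' u,0)\cdot(\nabla' \varphi,0)\Bigr|\les \Lambda r^{n-1}+ s\,\|\nabla'\varphi\|_\infty(\Exc_n(E,2r)+\ell r)r^{n-1}+s^2\|\nabla'\varphi\|_\infty^2\,r^{n-1}.
\]
Dividing by $s\,r^{n-1}$ and optimizing via the choice $s=\sqrt{\Lambda}/\|\nabla'\varphi\|_\infty$ converts the $s$-independent budget $\Lambda r^{n-1}$ into a $\sqrt{\Lambda}\,\|\nabla'\varphi\|_\infty$ contribution, yielding exactly \eqref{lip3}.

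The main obstacle is precisely this optimization: the classical argument takes $s\to 0$, whereas here one must fix $s\sim \sqrt{\Lambda}$ and therefore retain quadratic remainders in both the area formula and the diffeomorphism $\phi_s$, which requires checking that $\Exc_n(E,2r)+\Lambda+\ell r\le \eps_{\mathrm{lip}}$ is enough to keep this value of $s$ admissible. Once this is handled, the remaining bookkeeping (expansion of the area factor, anisotropy freezing, control of $M\setminus \Gamma$) is standard and proceeds as in \cite{Maggi,DPM}.
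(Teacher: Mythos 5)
Your proposal follows essentially the same route as the paper: standard Lipschitz approximation via the height bound and a Besicovitch/covering argument, then a first-variation computation for the almost-harmonicity, and — crucially — the correct new observation that the $s$-independent budget $\Lambda r^{n-1}$ in \eqref{def:Qmin} should be converted into a $\sqrt{\Lambda}$ contribution by \emph{not} sending $s\to 0$ but fixing $s\sim\sqrt{\Lambda}$. This is exactly the paper's choice $t=\sqrt{\Lambda}$ in its proof of the intermediate inequality \eqref{toprove}.

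One technical slip worth flagging: you define $\phi_s(x)=x+s\varphi(x')e_n$ with $\varphi\in C^1_c(B'_r)$ and assert $\phi_s(E)\Delta E\subset\C(r)$. This is not true as stated, because $\phi_s$ is a pure vertical shear on the infinite cylinder $\operatorname{supp}\varphi\times\R$ and does not equal the identity outside $\C(r)$; you have no control on $\partial E$ outside the cylinder, so the symmetric difference need not be bounded. The paper fixes this by inserting a vertical cutoff, taking $f_t(x)=x+t\alpha(x_n)\varphi(x')e_n$ with $\alpha\in C^1_c([-1,1])$ equal to $1$ on $[-1/2,1/2]$, which forces $f_t=\mathrm{id}$ for $|x_n|\ge 1$ and makes $f_t(E)\Delta E\subset\C$; the extra term coming from $\alpha'$ is then harmless thanks to the height bound. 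You should incorporate the same cutoff. With that correction your argument matches the paper's.
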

\begin{proof}
Up to choosing $\eps_{\rm lip}\le\eps_{\rm hb}$, we can repeat \emph{Step one} of Lemma 4.3 in \cite{DPM} to show the existence of a Lipschitz function  $u$ satisfying properties \eqref{lip1} and \eqref{lip2}. Letting $\Phi_0=\Phi(0,\cdot)$, to prove \eqref{lip3} we show that
\begin{equation}\label{toprove}
\frac{1}{r^{n-1}} \int_{\C(r)\cap\partial E} \nabla\Phi_0(\nu_E)\cdot (\nabla'\varphi ,0) (\nu_E\cdot e_n)\,d\mathcal H^{n-1}   \les \|\nabla' \varphi\|_\infty(\sqrt\Lambda+\ell r).
\end{equation}

\noindent
With this inequality at hand we may  conclude by following verbatim the proof of \cite[Lemma 4.3]{DPM}.

To get \eqref{toprove} we notice that by scaling we may assume that $r=\|\nabla'\varphi\|_\infty=1$. Let  $t>0$ and set $f_t(x)=x+t\alpha( x_n)\varphi( x') e_n$ where $\alpha\in C^1_c([-1,1],[0,1])$ with $\alpha=1$ on $[-1/2,1/2]$ and $|\alpha'|<3$.  Then, up to choosing $t$ small enough, we get that $f_t$ is a diffeomorphism of $\R^n$ and $f_t(E)\Delta E\subset \C$. By the $(\Lambda,r_0)-$minimality \eqref{def:Qmin} of $E$, we get that
\[
P_\Phi(E)\le P_\Phi(f_t(E))+\Lambda.
\]  
Using a  Taylor expansion and the area formula as  in \cite[page 526]{DPM}, we find
\[
P_\Phi(f_t(E))-P_\Phi(E)\le -t\int_{\C\cap\partial E} \nabla\Phi_0(\nu_E)(\nabla'\varphi,0)(\nu_E\cdot e_n)\,d\mathcal H^{n-1}+ C(\ell |t|+t^2).
\]
Combining the last two inequalities we get  
\[
\int_{\C\cap\partial E} \nabla\Phi_0(\nu_E)\cdot (\nabla'\varphi ,0) (\nu_E\cdot e_n)\,d\mathcal H^{n-1} \les\frac{\Lambda}{|t|}+|t|+\ell. 
\]
The proof of  \eqref{toprove} is concluded  by choosing $t=\sqrt\Lambda$.

\end{proof}

\subsection{Caccioppoli inequality}

We now turn to the Cacciopoli inequality which allows to control the excess by the flatness.
\begin{proposition}\label{caccioppoli}[Caccioppoli inequality]
There exists $\eps_{ca}>0$  such that if 
\[
 \Exc_\nu(E,4r)+ \Lambda+\ell r\le \eps_{\rm ca}
\]
 then (recall the definition \eqref{def:flat} of the flatness)
\begin{equation}\label{e<f}
\Exc_\nu(E,r)\les \f_{2,\nu}(E,2 r)+\Lambda+\ell r.
\end{equation}
\end{proposition}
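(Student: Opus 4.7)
The plan is to follow the classical proof of the Caccioppoli inequality for almost minimizers of the perimeter (cf.\ \cite[Theorem 24.1]{Maggi} and \cite{DPM} for the anisotropic version), modified to accommodate our weaker minimality condition \eqref{def:Qmin}. By translation and rotation I may assume $x=0$, $\nu=e_n$, and by rescaling $r=1$; the hypothesis then reads $\Exc_n(E,4)+\Lambda+\ell\le\eps_{\rm ca}$. Let $h^\star\in\R$ realize the minimum in $\f_{2,e_n}(E,2)$.

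First I would apply the Height Bound (Proposition \ref{hb}) at scale $2$ to confine $\partial E\cap\C(2)$ to a thin horizontal slab $\{|x_n|\ll 1\}$; from the definition of $h^\star$ it then follows that $|h^\star|\ll 1$ as well. By a Fubini argument on the slicing radius, I would next pick a good radius $t\in(1,2)$ such that both the $\H^{n-2}$-measure of the slice $\partial E\cap(\partial B'_t\times(-2,2))$ is $\les\Exc_n(E,2)$, and its vertical $L^2$-deviation from $h^\star$ is $\les\f_{2,e_n}(E,2)$.

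Next, I would define the comparison set
\[
F := \bigl(E\setminus (B'_t\times(-2,2))\bigr)\cup \bigl(B'_t\times(-2,h^\star)\bigr),
\]
replacing $E$ inside the cylinder $B'_t\times(-2,2)$ by the half-space $\{x_n<h^\star\}$. Since $E\Delta F\subset B'_t\times(-2,2)$ is contained in a ball of bounded radius, \eqref{def:Qmin} yields $P_\Phi(E)\le P_\Phi(F)+C\Lambda$. Expanding, $P_\Phi(F)-P_\Phi(E)$ splits into (i) the planar disk contribution from $B'_t\times\{h^\star\}$ minus $P_\Phi(E;B'_t\times(-2,2))$, and (ii) a lateral contribution on $\partial B'_t\times(-2,2)$ estimated by the good slice in terms of $\f_{2,e_n}(E,2)$. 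Freezing the anisotropy at the origin, $\Phi\rightsquigarrow\Phi_0:=\Phi(0,\cdot)$, costs $O(\ell)$ by the Lipschitz bound in \eqref{ellipticity}. The uniform convexity of $\Phi_0$ encoded in \eqref{ellipticity} then gives
\[
\Phi_0(\nu)-\Phi_0(e_n)-\nabla\Phi_0(e_n)\cdot(\nu-e_n)\ges \frac{|\nu-e_n|^2}{2},\qquad \nu\in\S^{n-1},
\]
while the linear term $\int \nabla\Phi_0(e_n)\cdot\nu_E\,d\H^{n-1}$ is a null-Lagrangian that reduces (by the divergence theorem applied to the constant vector field $\nabla\Phi_0(e_n)$) to a boundary flux on the lateral slice at radius $t$, again controlled by the good slice.

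Putting the estimates together yields \eqref{e<f}. The main obstacle is that our error term $\Lambda r^{n-1}$ in \eqref{def:Qmin} depends on the \emph{diameter} of $E\Delta F$ rather than on its measure as in \cite{Maggi,DPM}; fortunately the comparison set built above has $E\Delta F$ inside a ball of bounded radius, so \eqref{def:Qmin} still produces exactly the correct contribution $C\Lambda$ and no logarithmic loss appears. A secondary technical point is the passage $\Phi\rightsquigarrow \Phi_0$, which is responsible for the $\ell r$ term in the conclusion.
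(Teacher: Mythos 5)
The decisive gap is the choice of competitor. Replacing $E$ inside $B'_t\times(-2,2)$ by the half-space $\{x_n<h^\star\}$ is a cut-and-paste whose lateral glue on $\partial B'_t\times(-2,2)$ has $\H^{n-1}$-measure roughly
\[
\int_{\partial B'_t}\bigl|\{x_n\in(-2,2)\,:\, (x',x_n)\in E\Delta\{x_n<h^\star\}\}\bigr|\,d\H^{n-2}(x')
\ \approx\ \int_{\partial B'_t}|u(x')-h^\star|\,d\H^{n-2},
\]
which is an $L^1$ quantity in $|x_n-h^\star|$. A good-slice choice controls $\int_{\partial B'_t}|u-h^\star|^2$ by $\f_{2,\nu}(E,2)$, but the $L^1$ quantity is only bounded by $\sqrt{\f_{2,\nu}(E,2)}$ via Cauchy--Schwarz; there is no way to upgrade $\int|u-h^\star|$ to $\int|u-h^\star|^2$ pointwise (the height bound gives the opposite inequality). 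Your argument would therefore at best yield $\Exc_\nu(E,1)\les\sqrt{\f_{2,\nu}(E,2)}+\Lambda+\ell$, which is strictly weaker than \eqref{e<f} and would not iterate in the tilt lemma: plugging $\sqrt{\f_{2,\nu}}\les\theta\sqrt{\Exc_n(E,1)+\dots}$ into the Campanato scheme produces a fixed point of order $\theta^2$, not of order $\Lambda$, so the excess does not remain small. The uniform-convexity splitting and the null-Lagrangian observation you invoke are correct and are indeed part of the structure of the proof, but they do not repair the competitor.

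The proof you should be following, namely that of \cite[Lemma 4.4]{DPM} (or \cite[Lemmas 24.8--24.9]{Maggi} in the isotropic case), does not use a sharp cut-and-paste. It uses a vertical \emph{flattening} of $E$ towards the plane $\{x_n=h^\star\}$ governed by a radial cutoff $\varphi$: schematically $x\mapsto(x',\,x_n-\lambda\varphi(x')(x_n-h^\star))$. The first variation of the anisotropic perimeter under this deformation produces a cross term $\int \nabla'\varphi\cdot(x_n-h^\star)\,(\ldots)\,d\H^{n-1}$, which by Cauchy--Schwarz is $\les\sqrt{\f_{2,\nu}}\,\sqrt{\Exc_\nu}$; a Young inequality then absorbs $\Exc_\nu$ into the left-hand side and delivers the linear-in-$\f_{2,\nu}$ conclusion \eqref{e<f}. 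It is this construction that the paper is referring to when it notes that almost minimality enters only through \cite[(4.70)]{DPM}, and that the relevant symmetric difference is contained in a cylinder of bounded radius so that \eqref{def:Qmin} still yields the error $C\Lambda$. Your observation on that last point is correct, as is the bookkeeping for the $O(\ell r)$ freezing error; the problem is upstream, in the competitor.
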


\begin{proof}
By scaling it is enough to prove the result for $r=1$. Checking the proof of \cite[Lemma 4.4]{DPM} (see also \cite[Lemma 24.9]{Maggi} in the isotropic case) it can be seen  that almost minimality is only used there to derive  \cite[(4.70)]{DPM}. Since this would also be a consequence of the almost minimality property \eqref{def:Qmin}, the proof follows by repeating verbatim the proof of  \cite[Lemma 4.4]{DPM}.
\end{proof}

\subsection{Tilt Lemma}
We now combine the previous results to prove the anticipated  \emph{tilt} lemma, stating that if the cylindrical excess is small at a given scale then, up to a rotation it  remains small at smaller scales.
\begin{lemma}\label{tilt}[Tilt Lemma]
For every $\theta\in(0,1)$ small enough, there exists $\eps_{\rm tilt}=\eps_{\rm tilt}(\theta)>0$ and $C_\theta>0$ such that if $E$ is a $(\Lambda,r_0)-$minimizer and 
\[
 \Exc_n(E,r)+ \Lambda+\ell r\le \eps_{\rm tilt},
\]
then there exists $\nu\in \S^{n-1}$ such that 
\begin{equation}\label{eq:tilt}
 \Exc_{\nu}(E, \theta r)\les \theta^2 \Exc_n(E,r) + C_\theta \Lambda + \ell \theta r.
\end{equation}
% 
% 
% There exists an universal constant $c$ such that for every $\lambda\ge1$ and $\alpha\in(0,c)$ there exists positive constants $\eps_{tilt}=\eps_{tilt}(n,\lambda)$ and $C=C(n,\lambda)$ with the following properties. If $\Phi$ is an elliptic anisotropy of parameters $\lambda$ and $l>0$, $E$ is an $(\Lambda, r_0)-$quasiminimizer of $P_\phi$ in $C(x_0,r,\nu)$ with $0<r<r_0$ and where $x_0\in\partial  E$, and if 
% \[
% e_\nu(E,x_0,r)+\sqrt\Lambda+lr\le\eps_{tilt},
% \]
% then there exits   $\nu_0\in\mathbb S^{n-1}$  such that 
% \begin{equation}\label{eq:tilt}
% e_{\nu_0}(E,x_0,\alpha r)\le C \left( \alpha^2 e_\nu(E,x_0,r)+ \sqrt\Lambda +\alpha lr \right)\le C \alpha\left(  e_\nu(E,x_0,r)+   lr \right)+\sqrt\Lambda
% \end{equation}
\end{lemma}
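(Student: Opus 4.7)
We follow the classical harmonic-approximation scheme. By scaling we may assume $r=1$, so the hypothesis reads $\Exc_n(E,1)+\Lambda+\ell\le\eps_{\rm tilt}$ and the claim becomes the existence of $\nu\in\S^{n-1}$ with
\[\Exc_\nu(E,\theta)\lesssim \theta^2\Exc_n(E,1)+C_\theta\Lambda+\ell\theta.\]
The plan is: use Proposition~\ref{lipapprox} to approximate $\partial E$ by the graph of a Lipschitz function $u$ that is almost harmonic for a constant-coefficient linear elliptic operator $L$; compare $u$ to the exact $L$-harmonic competitor $v$ sharing its boundary values; read off the tilt direction $\nu$ from the first-order Taylor polynomial of $v$ at the origin; use the quadratic Taylor remainder to bound the cylindrical flatness at scale $2\theta$; and apply Proposition~\ref{caccioppoli} to convert flatness into cylindrical excess at scale $\theta$.

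Concretely, Proposition~\ref{lipapprox} at scale $\tfrac12$ supplies $u:B'_{1/2}\to\R$ with $\int|\nabla'u|^2\lesssim \Exc_n(E,1)$ and the almost-harmonicity \eqref{lip3}, which expresses that $u$ is a near-weak-solution of $Lu:=-\mathrm{div}(A\nabla'u)=0$, where $A$ is the $(n-1)\times(n-1)$ principal block of $\nabla^2\Phi(0,e_n)$, uniformly elliptic by \eqref{ellipticity}. Let $v\in u+H_0^1(B'_{1/2})$ solve $Lv=0$ exactly. A compactness/contradiction argument, in which one normalises $u_k$ by $\sqrt{\Exc_k}$ and extracts an $H^1$-weak limit for which \eqref{lip3} degenerates to $Lu_\infty=0$, delivers the harmonic approximation estimate: for every $\tau>0$ there is $\eta(\tau)>0$ such that
\[\sqrt\Lambda+\ell\le\eta(\tau)\sqrt{\Exc_n(E,1)}\quad\Longrightarrow\quad \int_{B'_{1/2}}|u-v|^2\le \tau\,\Exc_n(E,1).\]

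Since $L$ has constant coefficients, interior elliptic estimates yield $\|\nabla'^2 v\|_{L^\infty(B'_{1/4})}^2\lesssim \Exc_n(E,1)$. Set $a(x'):=v(0)+\nabla'v(0)\cdot x'$ and choose the tilt direction
\[\nu:=\frac{(-\nabla'v(0),1)}{\sqrt{1+|\nabla'v(0)|^2}}\in\S^{n-1},\]
so that $|\nu-e_n|\lesssim\sqrt{\Exc_n(E,1)}$ and Taylor expansion gives $\int_{B'_{2\theta}}|v-a|^2\lesssim \theta^{n+3}\Exc_n(E,1)$. Writing $\partial E\cap\C_\nu(2\theta)$ as the graph of $u$ plus a sliver $M\setminus\Gamma$ of $\H^{n-1}$-measure $\lesssim\Exc_n(E,1)$ (bounded in height by Proposition~\ref{hb}), splitting $|u-a|^2\le 2|u-v|^2+2|v-a|^2$, and invoking the harmonic approximation with $\tau=\theta^{n+3}$, a direct computation of the cylindrical flatness produces
\[\f_{2,\nu}(E,2\theta)\lesssim \theta^2\Exc_n(E,1)+C_\theta(\Lambda+\ell).\]
Proposition~\ref{caccioppoli} at scale $\theta$ then converts this into $\Exc_\nu(E,\theta)\lesssim \f_{2,\nu}(E,2\theta)+\Lambda+\ell\theta\lesssim \theta^2\Exc_n(E,1)+C_\theta\Lambda+\ell\theta$, as required.

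The delicate point is the harmonic approximation for $u-v$. Since \eqref{lip3} is only phrased against $C^1_c$ test functions with $L^\infty$-controlled gradient, it cannot be plugged directly into the admissible variation $u-v\in H_0^1(B'_{1/2})$. The contradiction/compactness argument sidesteps this at the price of partitioning the proof into two regimes: when $\max(\sqrt\Lambda,\ell)\ll\sqrt{\Exc_n(E,1)}$ the harmonic approximation supplies the desired $\theta^2$ factor, whereas in the complementary regime $\Exc_n(E,1)$ is itself dominated by $\Lambda+\ell^2$, so the trivial bound $\Exc_\nu(E,\theta)\lesssim \theta^{-(n-1)}\Exc_n(E,1)$ can be absorbed into $C_\theta\Lambda+\ell\theta$.
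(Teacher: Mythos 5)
Your proposal is correct and follows the same overall architecture as the paper's proof: Lipschitz approximation, harmonic approximation, tilt direction from $\nabla' v(0)$, a quadratic Taylor remainder to bound $\f_{2,\nu}$ at scale $2\theta$, and finally the Caccioppoli inequality. The one place where you take a genuinely different route is the handling of the error terms $\Lambda$ and $\ell$ in the harmonic-approximation step. You resolve the mismatch between the right-hand side of \eqref{lip3} (which contains $\sqrt\Lambda+\ell$) and the normalization $\sqrt{\Exc_n(E,1)}$ by splitting into two regimes according to whether $\sqrt\Lambda+\ell\lesssim\eta\sqrt{\Exc_n(E,1)}$, using the trivial bound $\Exc_\nu(E,\theta)\lesssim\theta^{-(n-1)}\Exc_n(E,1)$ in the bad regime. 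The paper instead avoids the case split entirely by normalizing $u$ by $\chi^{-1/2}$ where $\chi=C(\Exc_n(E,1)+\eta^{-1}\Lambda+\ell)$: this one choice makes $\int|\nabla'u_0|^2\le 1$ and simultaneously forces the near-harmonicity constant $\frac{\Exc_n+\sqrt\Lambda+\ell}{\sqrt{\Exc_n+\eta^{-1}\Lambda+\ell}}\lesssim\Exc_n^{1/2}+\eta^{1/2}+\ell^{1/2}$ to be small once $\eta$ and $\eps_{\rm tilt}$ are small, and it automatically produces the $C_\theta\Lambda+\ell\theta$ terms in the conclusion because the final flatness bound $\f_{2,\nu}(E,2\theta)\lesssim\theta^2\chi$ unwinds to $\theta^2\Exc_n+C_\theta\Lambda+\theta^2\ell$. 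Both routes are sound, but the paper's normalization is tidier because it removes the need to track which regime one is in.

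There is a small bookkeeping slip in your final line: from $\f_{2,\nu}(E,2\theta)\lesssim\theta^2\Exc_n(E,1)+C_\theta(\Lambda+\ell)$ and the Caccioppoli inequality you jump to $\Exc_\nu(E,\theta)\lesssim\theta^2\Exc_n(E,1)+C_\theta\Lambda+\ell\theta$, but $C_\theta\ell$ is of course not $\lesssim\ell\theta$. In the favourable regime $\sqrt\Lambda+\ell\le\eta\sqrt{\Exc_n(E,1)}$ you should not be getting $C_\theta(\Lambda+\ell)$ at all: those terms are dominated by $\eta^2\Exc_n(E,1)$ and so absorb into the $\theta^2\Exc_n(E,1)$ after choosing $\eta\lesssim\theta$, leaving only the $\Lambda+\ell\theta$ coming directly from Caccioppoli. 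In the complementary regime you need to observe, as you implicitly do, that $\ell^2\le\eps_{\rm tilt}\,\ell$ so that $\theta^{-(n-1)}\eta^{-2}\ell^2\le\ell\theta$ once $\eps_{\rm tilt}\le\theta^n\eta^2$ (a legitimate choice since $\eps_{\rm tilt}$ may depend on $\theta$). With those corrections the $\ell$ term is tracked correctly and your argument closes.
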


\begin{proof}
We follow relatively closely the proof in \cite[Section 25.2]{Maggi} (see also \cite[Lemma 4.6]{DPM}). 
By scaling, we may assume that $r=1$. 
Up to choosing $\eps_{\rm tilt}$ smaller than $\eps_{\rm lip}$ and $\eps_{\rm hb}$, we may apply Lemmas \ref{hb} and \ref{lipapprox}. Define the $(n-1)\times(n-1)$ matrix $A$ with components  $A_{i,j}=\nabla^2\Phi(0,e_n) e_i\cdot e_j$, for $i,j=1,\ldots, n-1$. 
We recall the following simple result from elliptic regularity theory (see \cite[Lemma 4.7]{DPM}). For every $\tau>0$, there exists $\eps_{\rm har}=\eps_{\rm har}(\tau)$ such that 
if $u$ is such that for every $\varphi\in C^1_c(B'_1)$,
\[
  \int_{B'_1} |\nabla'u|^2\le 1 \qquad \textrm{and } \qquad \int_{B'_1} (A\nabla'u)\cdot \nabla'\varphi\les \eps_{\rm har} \|\nabla'\varphi\|_\infty,
\]
 then there exists $v$ such that  for every $\varphi\in C^1_c(B'_1)$
\[
 \int_{B'_1}|u-v|^2\le\tau, \quad \int_{B'_1}|\nabla'v|^2\le 1 \quad \textrm{and } \quad \int_{B'_1} (A\nabla'v)\cdot \nabla'\varphi=0.
\]
 Let now $\tau>0$ be a small parameter to be chosen depending on $\theta$ and let $\eta$ be another small parameter to be fixed depending on $\tau$ (and thus ultimately on $\theta$). Let $C>0$ be a large (universal) constant
and set 
\begin{equation}\label{def:chi}
\chi=C(\Exc_n(E,1)+\eta^{-1}\Lambda+\ell ). 
\end{equation}
  Let $u$ be the function given by Proposition \ref{lipapprox}. Setting $u_0=\chi^{-\frac{1}{2}}u$, we get from \eqref{lip2} and \eqref{lip3} that for any $\varphi\in C^1_c(D)$
\begin{multline*}
\int_{B'_1} |\nabla'u_0|^2\le 1 \qquad \textrm{ and }\\ \int_{B'_1} A\nabla'u_0\cdot\nabla'\varphi\les \|\nabla'\varphi\|_\infty\frac{\Exc_n(E,1)+ \sqrt{\Lambda} + \ell}{\sqrt{\Exc_n(E,1)+ \eta^{-1}\Lambda+\ell}}\les \|\nabla'\varphi\|_\infty(\Exc_n^{\frac{1}{2}}(E,1)+ \eta^{\frac{1}{2}}+ \ell^{\frac{1}{2}}).
\end{multline*}
Therefore, if $\eta$ is chosen small enough so that 
\[
 \Exc_n^{\frac{1}{2}}(E,1)+ \eta^{\frac{1}{2}}+ \ell^{\frac{1}{2}}\ll \eps_{\rm har},
\]
We may apply the result quoted above to find a $A-$harmonic function $v_0$ such that 
\[
 \int_{B'_1}|u_0-v_0|^2\le\tau, \quad \int_{B'_1}|\nabla'v_0|^2\le 1.
\]
Letting $v=\chi^{\frac{1}{2}} v_0$, this is equivalent to

\begin{equation}\label{v}
\int_{B'_1}|u-v|^2\le \tau\chi,\qquad\int_{B'_1} |\nabla' v|^2\le\chi.
\end{equation}
Consider $w(z)=v(0)+\nabla' v(0)\cdot z$ the tangent map at $0$ to  $v$. By standard elliptic regularity we have for any $\theta<1/2$,
\[
\sup_{B_\theta'}|v-w|^2\les \theta^4 \int_{B'_1}|\nabla'v|^2 \les \theta^4\chi.
\]
We now choose $\tau=\theta^{n+3}$ and combine the last inequality together with \eqref{v} and triangle inequality to obtain 
\begin{equation}\label{eq:centraltilt}
\frac{1}{\theta^{n+1}}\int_{B'_\theta}|u-w|^2\les \lt(\frac{\tau}{\theta^{n+1}}+\theta^2 \right)\chi\les \theta^2\chi.
\end{equation}
Finally, we set the vector $\nu$ to be the unit vector orthogonal to the graph of $v$ at the origin i.e.
\[
\nu=\frac{(-\nabla' v(0),1)}{\sqrt{1+|\nabla' v(0)|^2}}.
\]
A key observation is that with this choice of the normal,  the tilting is small, 
\begin{equation}\label{eq:smallnu}
|\nu-e_n|^2\les |\nabla'v(0)|^2\les \int_{B'_1} |\nabla' v|^2\les \chi.
\end{equation} 
Hence, since $\C_\nu(\theta)\subset \C(2\theta)$, we get
\[
\begin{aligned}
\Exc_{\nu}(E,\theta)&=\frac{1}{\theta^{n-1}}\int_{\partial E\cap \C_\nu(\theta)}\frac{|\nu_E-\nu|^2}{2} d\H^{n-1}\\
&\le \frac{1}{\theta^{n-1}}\int_{\partial E\cap \C(2\theta)}\frac{|\nu_E-\nu|^2}{2}d\H^{n-1}\\
&\les \frac{1}{\theta^{n-1}}\int_{\partial E\cap \C(2\theta)}\frac{|\nu_E-e_n|^2}{2} d\H^{n-1}+\frac{1}{\theta^{n-1}}P(E,\C(2\theta))|\nu-e_n|^2 \\ 
&\stackrel{\eqref{dens2}\&\eqref{eq:smallnu}}{\les} \Exc_n(E,2\theta)+\chi\\
&\les \theta^{-(n-1)}\Exc_n(E,1)+\chi.
\end{aligned}
\]
Possibly further reducing the value of $\eps_{\rm tilt}$ so that the right-hand side of the previous inequality lies below $\eps_{\rm ca}$, we can thus  apply Proposition \ref{caccioppoli} to $\Exc_\nu(E,\theta)$ and obtain (up to renaming $\theta$)
\[
\Exc_\nu(E,\theta)\les\f_{2,\nu}(E,2\theta)+\Lambda+ \ell \theta.
\]
The proof is then concluded arguing exactly as in the proof of \cite[ (4.122)]{DPM}, or, in the isotropic case $\ell=0$, as in \cite[ (25.22), page 343]{Maggi}, using \eqref{lip1}, \eqref{eq:centraltilt} and \eqref{eq:smallnu} 
to obtain 
\begin{equation}\label{poincarefe}
\f_{2,\nu}(E,2\theta)\les\theta^2 \chi ,
\end{equation}
which recalling the definition \eqref{def:chi} of $\chi$, immediately implies \eqref{eq:tilt}.
\end{proof}

We can finally conclude the proof of Proposition \ref{excessdecay}.
\begin{proof}[Proof of Proposition \ref{excessdecay}]
 We first translate the conclusion of Lemma \ref{tilt} from the cylindrical excess to the spherical excess. We claim that for every $\theta\in(0,1)$ small enough, there exists $\eps_{\rm it}>0$ and $C_\theta>0$ such that if 
 \begin{equation}\label{hyp:smallexcess}
  \Exc(E,r)+ \Lambda + \ell r\le \eps_{\rm it},
 \end{equation}
then 
\begin{equation}\label{excessdecayonestep}
 \Exc(E,\theta r)\le C(\theta^2 \Exc(E,r) + \ell \theta r)+ C_\theta \Lambda.
\end{equation}
Indeed, if \eqref{hyp:smallexcess} holds, up to a rotation we may assume that the infimum defining $\Exc(E,r)$ (see \eqref{def:excess}) is attained at $e_n$. Since $\C(\frac{1}{\sqrt{2}}r)\subset B_r$, this implies that $\Exc_n(E,\frac{1}{\sqrt{2}}r)\les \Exc(E,r)$  so that we may apply Lemma \ref{tilt} and obtain \eqref{excessdecayonestep} (recall that $\Exc(E,\theta r)\le \Exc_\nu(E,\theta r)$).\\
Fix now $\theta$ such that $C\theta\le \frac{1}{2}$ and let $\bar C=1+ 2 C_\theta$. Finally choose $\eps_{\rm dec}>0$ such that $(\bar C +1)\eps_{\rm dec} \le \eps_{\rm it}$. We claim that if $\eps\in(0,\eps_{\rm dec})$ and 
\[
 \Exc(E,r)+ \Lambda + \ell r\le \eps,
\]
then for every $k\ge 0$
\begin{equation}\label{eq:smallexcess}
 \Exc(E, \theta^k r)\le \bar C \eps. 
\end{equation}
Indeed, by hypothesis it holds for $k=0$. If it holds for $k$, then by the choice of parameters,
\[
 \Exc(E,\theta^k r)+ \Lambda + \ell \theta^k r\le (\bar C+1)\eps_{\rm dec}\le \eps_{\rm it}
\]
so that by \eqref{excessdecayonestep}
\[
 \Exc(E, \theta^{k+1} r)\le  C(\theta^2 \Exc(E,\theta^k r) + \ell \theta^k r)+ C_\theta \Lambda\le \frac{\bar C}{2} \eps + \frac{1}{2} \eps + C_\theta \eps
 \le \bar C \eps,
\]
proving \eqref{eq:smallexcess}. Finally if $r'\in (\theta^{k+1} r,\theta^k r)$
\[
 \Exc(E,r')\les \frac{1}{\theta^{n-1}} \Exc(E,\theta^k r),
\]
which concludes the proof of \eqref{conclusiondecay}.

\end{proof}
\section{Proof of the main results}
We begin by proving the $\eps-$regularity result in  Theorem \ref{theo:eps}.
\begin{proof}[Proof of Theorem \ref{theo:eps}]
 For fixed $\delta>0$, let $\eps\ll \min (\eps_{\rm dec}, \eps_{\rm hb}(\delta))$. Since for $x\in B_{\frac{r}{2}}$,
 \[
  \Exc\lt(E,x, \frac{r}{2}\rt)\les \Exc(E,r),
 \]
if $\Exc(E,r)$ is small enough, we can apply Proposition \ref{excessdecay} at every $x\in B_{\frac{r}{2}}$ to get that
\[
 \Exc(E,x,r')\les \eps \qquad \forall x\in B_{\frac{r}{2}}, \ \textrm{ and } r'\le \frac{r}{2}.
\]
By the height bound Proposition \ref{hb} (see also Remark \ref{rem:Reifenberg}), this proves that $E$ is $(\delta, \frac{r}{2})-$Reifenberg flat  in $B_{\frac{r}{2}}$.
\end{proof}

We now prove our main result.
 
\begin{proof}[Proof of Theorem \ref{theo:main}]
The proof follows a classical scheme, see \cite[Theorem 28.1]{Maggi} or \cite[Theorem 1.1]{DHV}. Since the statement is local, we may assume without loss of generality that $E$ is bounded. For $\eps\ll1$ we let 
 \[
  \Sigma_\eps(E)=\lt\{ x\in\partial E\,:\, \limsup_{r\to0} \Exc(E,x,r)>\eps \right\}.
 \]
By Theorem \ref{theo:eps}, we know that if $\eps$ is small enough then  $\partial E\backslash \Sigma_\eps(E)$ is $\delta-$Reifenberg flat and it is thus enough to prove that $\H^{n-3}(\Sigma_\eps(E))=0$ if $\Lambda$ is small enough. For this we argue by contradiction and assume that there exists a sequence $E_k$ of $(\Lambda_k,r_0)-$minimizers with $\Lambda_k\to 0$ and $\H^{n-3}(\Sigma_\eps(E_k))>0$. Denoting by  $\H_\infty^{n-3}$ the infinity Hausdorff pre-measure, by \cite[Lemma 28.14]{Maggi} there exists a sequence $x_k\in \partial E_k$ and radii $r_k\to 0$ such that 
\[
 \limsup_{k\to \infty} \frac{\H_\infty^{n-3}(\Sigma_\eps(E_k)\cap B_{r_k}(x_k))}{r_k^{n-3}}\ges 1.
\]
Letting $F_k= r_k^{-1}(E_k-x_k)$, we have that $F_k$ are again $(\Lambda_k,r_0)-$minimizers with
\begin{equation}\label{eq:togetabsurd}
 \limsup_{k\to \infty} \H_\infty^{n-3}(\Sigma_\eps(F_k)\cap B_{1})\ges 1.
\end{equation}
Up to  a subsequence, $x_k\to x_\infty$. Letting $\Phi_\infty(\nu)=\Phi(x_\infty, \nu)$, by the perimeter bound \eqref{dens2}, up to  a further subsequence, $F_k\to F$. By simple comparison arguments it is then not difficult to prove that  $F$ is a minimizer of $P_{\Phi_\infty}$ with $P(F_k,B_r(x))\to P(F,B_r(x))$ for every $B_r(x)$ (see \cite[Theorem 21.14]{Maggi} or \cite{GoMaTa}).  Let $\Sigma(F)=\lt\{x \in \partial F \ : \ \limsup_{r\to0} \Exc(F,x,r)>0 \right\}$ be the singular set of $F$. By the regularity theory for minimizers of uniformly elliptic and  regular anisotropic perimeter $\H^{n-3}(\Sigma(F))=0$, see \cite{SSA} (in the isotropic case this may be improved to $\H^s(\Sigma(F))=0$ for every $s>n-8$, see \cite[Theorem 28.1]{Maggi}). Thus, by definition of the Hausdorff measure, for every $\eta>0$ there exists an open set $\Omega_\eta$ such that 
\[
 \Sigma(F)\subset \Omega_\eta \qquad \textrm{and } \qquad \H^{n-3}_\infty(\Omega_\eta)\le \eta.
\]
We claim that if $k$ is large enough then $\Sigma_\eps(F_k)\subset \Omega_\eta$ which implies $\H^{n-3}_\infty(\Sigma_\eps(F_k))\le \eta$ in  contradiction with \eqref{eq:togetabsurd}. Indeed, otherwise there would exist a sequence $y_k\in \Sigma_\eps(F_k)$ with $y_k\to y$ and $y\notin \Sigma(F)$. By the perimeter convergence and the lower bound in \eqref{dens2} we find that $y\in \partial F$. Therefore, by classical $\eps-$regularity theory (see for example \cite[Theorem 3.1]{DPM}), for every $\eps'\ll1$, there exists $r>0$ such that 
\[
 \Exc(F,y,r)\le \eps'.
\]
Thanks to the perimeter convergence, we also have convergence of the excess (see for instance \cite[Proposition 22.6]{Maggi}) and thus for $k$ large enough,
\[
 \Exc(F_k,y_k,r)\le \Exc(F,y,r)+\eps'\le 2 \eps'.
\]
If we choose $\eps'\ll \eps$, by  Proposition  \ref{excessdecay} on the excess decay, we find that $y_k\notin \Sigma_\eps(F_k)$ which is a contradiction.
\end{proof}

We finally prove  Corollary \ref{cor:reg}.

\begin{proof}[Proof of Corollary \ref{cor:reg}]
 Given $\delta$, let $\eps=\eps(\delta)$ be given by Theorem \ref{theo:eps} and  set $\bar \Lambda=\frac{1}{2}\eps$. Since $E$ is compact and smooth at each point  $x\in\partial E$, $\lim_{r\to 0} \Exc(E,x,r)=0$. Therefore, for every $\eps'\ll \eps$, we can find $r>0$ and points $x_i\in \partial E$ such that for all $i$,
 \[
  \Exc(E,x_i, 4r)\le \frac{\eps'}{2}  \qquad \textrm{and } \qquad \partial E\subset \cup_i B_{\frac{r}{4}}(x_i).
 \]
If now $E_k$ is a sequence of $(\Lambda_k,r_0)-$minimizers converging in $L^1$ to $E$, by the uniform density bounds \eqref{dens1} and \eqref{dens2} of Proposition \ref{density}, we obtain the Hausdorff convergence of $\partial E_k$ to $\partial E$. In particular, for $k$ large enough we get that
\[
 \partial E_k \subset \cup_i B_{\frac{r}{2}}(x_i).
\]
Moreover, if $k$ is large enough we have $\Lambda_k\le 2 \bar \Lambda\le \eps$, 
and arguing as above using perimeter convergence we can find points $x_{i,k}\in \partial E_k\cap B_{\frac{r}{2}}(x_i)$ such that 
\[
\Exc(E_k, x_{i,k},2r)\les \Exc(E_k,x_i,4r)\le \eps'.
\]
By choosing $\eps'$ small enough, we may thus apply Theorem \ref{theo:eps} at every $x_{i,k}$ to obtain that $\partial E_k$ is $(\delta,r)-$Reifenberg flat  in $\cup_{i} B_{r}(x_{i,k})\supset \partial E$.
\end{proof}

\section*{Acknowledgments}
M.G. and B.R acknowledge partial support from the ANR-18-CE40-0013 SHAPO financed by the French Agence Nationale de la Recherche (ANR). 
M.G. was also supported by  the Investissement d'avenir project, reference ANR-11-LABX-0056-LMH, LabEx LMH.
M.N. was supported by the PRIN Project 2019/24. M.N. and B.R. are members of the INDAM-GNAMPA.

\bibliographystyle{acm}

\bibliography{biblio}
\end{document}